\documentclass[12pt,leqno,a4paper,final]{amsart}
\usepackage{amsmath, amsthm, verbatim, amsfonts, amssymb, color}
\usepackage{mathrsfs}
\usepackage{dsfont}
\usepackage[a4paper,margin=3cm]{geometry}

\theoremstyle{plain}
\newtheorem{theorem}{Theorem}[section]
\newtheorem{corollary}[theorem]{Corollary}
\newtheorem{lemma}[theorem]{Lemma}

\theoremstyle{definition}
\newtheorem{remark}[theorem]{Remark}

\numberwithin{equation}{section}
\renewcommand\labelenumi{\textup{\alph{enumi})}}
\renewcommand\theenumi\labelenumi

\newcommand\Ee{\mathds{E}}
\newcommand\nat{\mathds{N}}
\newcommand\real{{\mathds{R}}}
\newcommand\I{\mathds{1}}
\newcommand\Pp{\mathds{P}}

\newcommand\dup{\mathrm{d}}
\newcommand\eup{\mathrm{e}}

\newcommand{\di}{n} 
\newcommand{\ca}{a} 
\newcommand{\cb}{b}
\newcommand{\cc}{c}
\newcommand{\cd}{d}

%
%
\usepackage[notref,notcite]{showkeys}
\usepackage{xcolor}

\usepackage{marginnote}
\marginparwidth30pt 

\pdfoptionpdfminorversion=6

\begin{document}
\title[Asymptotic Formulas for Heat Kernels]{\bfseries Exact Asymptotic Formulas for the Heat Kernels
of Space and Time-Fractional Equations}


\author[C.-S.~Deng]{Chang-Song Deng}
\address[C.-S.~Deng]{School of Mathematics and Statistics\\ Wuhan University\\ Wuhan 430072, China}
\email{dengcs@whu.edu.cn}
\thanks{Financial support through the National Natural Science Foundation of China (11401442, 11831015) (for Chang-Song Deng) is gratefully acknowledged.}

\author[R.L.~Schilling]{Ren\'e L.\ Schilling}
\address[R.L.~Schilling]{TU Dresden\\ Fakult\"{a}t Mathematik\\ Institut f\"{u}r Mathematische Stochastik\\ 01062 Dresden, Germany}
\email{rene.schilling@tu-dresden.de}

\begin{abstract}
    This paper aims to study the asymptotic behaviour of the fundamental solutions (heat kernels) of non-local (partial and pseudo differential) equations with fractional operators in time and space. In particular, we obtain exact asymptotic
    formulas for the heat kernels of time-changed Brownian motions and Cauchy processes. As an application, we obtain exact asymptotic formulas for the fundamental solutions to the $n$-dimensional fractional heat equations
    in both time and space
    \begin{gather*}
        \frac{\partial^\beta}{\partial t^\beta}u(t,x)
        = -(-\Delta_x)^\gamma u(t,x), \quad \beta,\gamma\in(0,1).
    \end{gather*}
\end{abstract}
\subjclass[2010]{\emph{Primary:} 60J35. \emph{Secondary:} 60G51; 60K99; 35R11; 35K08.}
\keywords{Heat kernel; asymptotic formula; space-fractional equation; time-fractional equation; subordinator; inverse subordinator.}

\maketitle

\section{Introduction}\label{intro}

We are interested in the asymptotic behaviour at zero and at infinity of time- and space-fractional evolution equations. The simplest examples of such equations are
\begin{gather}\label{pde}
    \textup{(a)}\quad \frac{\partial u}{\partial t} = -(-\Delta_x)^\beta u
    \qquad\text{and}\qquad
    \textup{(b)}\quad \frac{\partial^\beta u}{\partial t^\beta} = \Delta_x u,
\end{gather}
where $\Delta=\Delta_x = \sum_{i=1}^\di\frac{\partial^2}{\partial x_i^2}$ is the Laplace operator on $\real^\di$, $(-\Delta)^\beta$ is the fractional power of the Laplacian of
order $\beta\in(0,1)$,
\begin{gather*}
    -(-\Delta_x)^\beta u(x)
    =\frac{\beta 4^\beta \Gamma\left(\beta + \frac{\smash{\di}}{2}\right)}{\pi^{\di/2} \Gamma(1-\beta)}
    \lim_{\epsilon\downarrow 0}  \int_{|y|>\epsilon} \frac{u(x+y)-u(x)}{|y|^{\di+2\beta}}\,\dup y,
\end{gather*}
and $\frac{\partial^\beta }{\partial t^\beta}$ is the Caputo derivative of order $\beta\in(0,1)$, i.e.
\begin{gather*}
    \frac{\partial^\beta f(t)}{\partial t^\beta}
    =
    \frac{1}{\Gamma(1-\beta)}\frac{\dup }{\dup t}\int_0^t \frac{f(s)-f(0)}{(t-s)^\beta}\,\dup s.
\end{gather*}
Our standard references for fractional derivatives in time is Samko \emph{et al.}~\cite{Samko}, for the fractional Laplacian in space we use Jacob \cite{Jac} and Kwa\'snicki~\cite{Kwa17}. If $\beta = 1$, \eqref{pde} becomes the classical heat equation whose fundamental solution is the Gauss kernel
\begin{gather}\label{gauss}
    p(t,x,y)
    =\frac{1}{(4\pi t)^{\di/2}}\exp\left[-\frac{|x-y|^2}{4t}\right],\quad t>0,\;x,y\in\real^\di,
\end{gather}
which is also the transition probability density of a Brownian motion $X=(X_t)_{t\geq 0}$ in $\real^\di$. Over the past years there has been considerable interest in space, time and space-and-time fractional equations. The paper \cite{hah-uma} by Hahn and Umarov explains how such equations arise as Fokker--Planck and Kolmogorov equations related to the solutions of SDEs, in a series of papers Luchko and co-authors \cite{gor-luc-yam,luchko,luchko-yama} study related Cauchy problems, see also Hu et al.\ \cite{jac-et-al} for fractional-in-time initial value problems with a pseudo-differential operator in space; Butko \cite{butko} investigates Chernoff-type approximations of the semigroups of such equations. Time-and-space fractional Schr\"{o}dinger equations are discussed by Dubbeldam et al.\ \cite{dubbeldam}. There are various generalizations of such problems, e.g.\ in the direction of fractional stochastic differential equations where a space-time noise term is added on the right-hand side, see e.g.\ Yan and Yin \cite{yan}, or in the direction of semi-fractional derivatives (in time) and semi-stable semigroup generators (in space), see Kern et al.~\cite{kern}.

Both equations in \eqref{pde} have interesting probabilistic interpretations. Denote by $S=(S_t)_{t\geq0}$ a $\beta$-stable subordinator ($0<\beta<1$), i.e.\ a non-decreasing L\'evy process on $[0,\infty)$ with Laplace transform
\begin{gather*}
    \Ee\,\eup^{-rS_t}
    =\eup^{-tr^\beta},
    \quad r>0,\;t\geq0.
\end{gather*}
If $S$ and $X$ are stochastically independent, the time-changed process $X_S = (X_{S_t})_{t\geq 0}$ is a rotationally symmetric $2\beta$-stable L\'evy process. By independence, the transition probability density of $X_{S_t}$ is given by
\begin{gather}\label{su11}
    p^S(t,x,y)
    = \int_0^\infty p(s,x,y)\,\dup_s\Pp(S_t\leq s),
\end{gather}
and Bochner~\cite{Boc49} observed that this is the fundamental solution to the space-fractional
equation (\ref{pde}.a). By $\dup_s$ we denote
the (generalized) derivative w.r.t.\ $s$. This
type of time-change is usually called \emph{subordination} (in the sense of Bochner) and the process $X_S$ is said to be \emph{subordinate} to $X$, cf.\ \cite{SSV}.

If we perform a time-change with the generalized right-continuous inverse of $S$,
\begin{gather*}
    S^{-1}_t
    =\inf\left\{s\geq0\,:\,S_s>t\right\}
    =\sup\left\{s\geq0\,:\,S_s\leq t\right\},
    \quad t\geq 0,
\end{gather*}
we get a stochastic process $X_{S^{-1}} = (X_{S_t^{-1}})_{t\geq 0}$ which is trapped whenever $t\mapsto S_t^{-1}$ is constant. Note that the jumps of $t\mapsto S_t$ correspond to flat pieces of $t\mapsto S_t^{-1}$. These traps slow down the original diffusion process $X$, and in the physics literature $X_{S^{-1}}$ is commonly referred to as \emph{subdiffusion}, see e.g.\ \cite{MWW07, MK00, PSW05} for some applications, \cite{MNX08,Mag10, MS15} for sample path properties and \cite{MS04} for a representation as scaling limit of a continuous time random walk with heavy-tailed waiting times between the steps.

Since the length of the trapping periods are, in general, not exponentially distributed, we cannot expect that $X_{S^{-1}}$ is a Markov process. Nevertheless, the transition probability density of each random variable $X_{S^{-1}_t}$, $t>0$, can be expressed as
\begin{gather}\label{su22}
    p^{S^{-1}}(t,x,y)
    =\int_0^\infty p(s,x,y)\,\dup_s\Pp\left(S^{-1}_t\leq s\right),
\end{gather}
and it is not hard to see, using the Fourier--Laplace transform, that $p^{S^{-1}}(t,x,y)$ is the fundamental solution to the time-fractional heat equation (\ref{pde}.b), see e.g.\ \cite[Theorem 5.1]{MS04} or \cite{BM01, MK00}.

Already in the simple setting \eqref{pde}, the densities $p^S$ and $p^{S^{-1}}$ are often not explicitly known -- a notable exception is $p^S$ for the $\beta = \frac 12$-stable subordinator: In this case $X_S$ is the symmetric Cauchy process and its transition probability density is the Poisson kernel on $\real^\di$,
\begin{gather}\label{poisson}
    p(t,x,y)
    = \frac{c(\di)}{t^\di}\left(1+\frac{|x-y|^2}{t^2}\right)^{-\frac{\di+1}2}
    = \frac{c(\di)t}{\left(t^2+|x-y|^2\right)^{(\di+1)/2}},
\end{gather}
where $c(\di) :=\pi^{-(\di+1)/2}\Gamma\left(\frac{\di+1}{2}\right)$. Therefore, it is important to know the asymptotic behaviour of $p^S$ and $p^{S^{-1}}$ at zero and infinity. For the fundamental solution to (\ref{pde}.a) the asymptotics of $p^S$ at infinity is known to be
\begin{gather}\label{jh4d2}
    p^S(t,x,y)
    \,\sim\,
    \frac{c(\di,\beta)t}{\left(|x-y|^2+t^{1/\beta}\right)^{(\di+2\beta)/2}}
    \quad\text{as $|x-y|^2t^{-1/\beta}\to\infty$},
\end{gather}
where $c(\di,\beta):=\beta 4^{\beta}\pi^{-1-\di/2}\sin(\pi\beta)\Gamma\left(\frac{\di+2\beta}{2}\right)\Gamma(\beta)$. For $d=1$ this formula is due to P\'{o}lya \cite{Pol23} who used Fourier methods, and
the case $d\geq 1$ can be found in Blumenthal and Getoor \cite{BG60}. A beautiful short proof is due to Bendikov \cite{Ben94}. Our approach is similar to Bendikov's and we show that this method also yields the asymptotics at zero. If we combine these methods, we can obtain the asymptotics of the heat kernels of the following heat equations with fractional operators in both time and space:
\begin{equation}\label{frac11}
    \frac{\partial^\beta u}{\partial t^\beta}
    = -(-\Delta_x)^\gamma u,
    \quad \beta,\gamma\in(0,1).
\end{equation}

\section{Results}\label{res}

\subsection{General Results}

Let $\mathcal{L}$ be the infinitesimal generator of a Feller process $X=(X_t)_{t\geq 0}$ which takes values in a locally compact separable
metric space $(M,\rho)$. We assume that $X$ has a transition probability density $p(t,x,y)$; note that $p$ is the fundamental solution to the Kolmogorov backward equation  $\frac{\partial u}{\partial t} = \mathcal{L}_x u$. We may replace in \eqref{pde} the Laplace operator $\Delta$ by the generator $\mathcal{L}$. The resulting equations
\begin{gather}\label{pde2}
    \textup{(a)}\quad \frac{\partial u}{\partial t} = -(\mathcal{-L}_x)^\beta u
    \qquad\text{and}\qquad
    \textup{(b)}\quad \frac{\partial^\beta u}{\partial t^\beta} = \mathcal{L}_x u,
\end{gather}
are the Kolmogorov backward equation (\ref{pde2}.a), resp., the master equation (\ref{pde2}.b) of the time-changed processes $X_S$ and $X_{S^{-1}}$, respectively. As before, $S=(S_t)_{t\geq 0}$ is a $\beta$-stable subordinator, and the fundamental solutions to the problems \eqref{pde2} are still given by the formulas \eqref{su11} and \eqref{su22}, with $p(t,x,y)$ being the probability density of $X_t$.

A deep result by Grigor'yan and Kumagai on two-sided heat kernel estimates \cite[Theorem 4.1]{GK08} shows that under some reasonable conditions, $p(t,x,y)$ satisfies
\begin{gather}\label{twosided}
    \frac{c_1}{t^{\di/\alpha}}F\left(c_2\frac{\rho(x,y)}{t^{1/\alpha}}\right)
\leq
    p(t,x,y)
\leq
    \frac{C_1}{t^{\di/\alpha}}F\left(C_2\frac{\rho(x,y)}{t^{1/\alpha}}\right),
\end{gather}
for suitable constants $\di,\alpha>0$, $c_1, c_2, C_1,C_2>0$, the metric $\rho(x,y)$ on $M$, and  a `profile function' $F$ which is either of exponential type
\begin{gather}\label{form1}
    F(r)
    = \exp\left[-r^{\alpha/(\alpha-1)}\right]
    \quad\text{for some $\alpha\geq 2$},
\intertext{or of polynomial type}\label{form2}
    F(r)
    = (1+r^2)^{-(n+\alpha)/2}\quad\text{for some $\alpha>0$}.
\end{gather}

In order to keep the presentation simple, we assume that $p(t,x,y)$ is of the form
\begin{gather}\label{kernel1}
    p(t,x,y)
    =
    \frac{C_1}{t^{\di/\alpha}} F\left(C_2\frac{\rho(x,y)}{t^{1/\alpha}}\right),
    \quad t>0,\; x,y\in M,
\end{gather}
where $\di,\alpha,C_1,C_2>0$ are constants, and $F:[0,\infty)\to(0,\infty)$ is a non-increasing profile function of exponential or polynomial type. This assumption is, if one has in mind \cite[Conjecture 1.1]{JKLS12} and the concrete examples given there, not artificial. Note that our results hold -- with estimates and explicit constants (given in terms of $c_i$ and $C_i$) rather than asymptotics -- if we use \eqref{twosided} instead of \eqref{kernel1}. We leave these obvious adaptations to the reader. Our results might also be interesting for ultrametric spaces $(X,d)$ where heat kernels are often explicitly known and of the form $p(t,x,y) = t \int_0^{1/d_*(x,y)} N(x,s)\,e^{-ts}\,ds$, see Bendikov \cite{Ben18}; $d_*(x,y)$ is an intrinsic ultrametric which defined on the basis of the underlying ultrametric $d$.

For example, \eqref{gauss} is exponential with $M=\real^\di$, $\rho(x,y)=|x-y|$, $\alpha=2$, $C_1=(4\pi)^{-\di/2}$, $C_2=1/2$, and
$F(r)=\eup^{-r^2}$, while \eqref{poisson} is of polynomial type with $M=\real^\di$, $\rho(x,y)=|x-y|$, $\alpha=1$, $C_1=c(\di)$, $C_2=1$, and $F(r)=(1+r^2)^{-(\di+1)/2}$.

Recently, Chen \emph{et al.}~\cite{CKKW17} have established two-sided heat kernel estimates for $p^{S^{-1}}(t,x,y)$ where $S$ is a (not necessarily stable) subordinator and under the assumption that the original heat kernel $p(t,x,y)$ satisfies two-sided estimates of the form \eqref{twosided}.

Our aim is to investigate the exact asymptotic behaviour of the heat kernels $p^S(t,x,y)$ and $p^{S^{-1}}(t,x,y)$ at zero and at infinity. The setting is as described above, and
throughout this subsection we assume that $S$ is
a $\beta$-stable subordinator.

\begin{theorem}[Asymptotics for subordination]\label{mainsub2}
    Assume that $p(t,x,y)$ is given by \eqref{kernel1} and $(S_t)_{t\geq0}$ is a $\beta$-stable subordinator for some $\beta\in(0,1)$.
    \begin{enumerate}
    \item\label{mainsub2-a}
      If $\int_1^\infty s^{\di+\alpha\beta-1}F(s)\,\dup s <\infty$, then as $\rho(x,y)t^{-1/(\alpha\beta)}\to\infty$,
      \begin{gather*}
          p^S(t,x,y)
          \,\sim\,
          C_1\frac{\alpha\beta}{\Gamma(1-\beta)}\,C_2^{-\di-\alpha\beta} \int_0^\infty s^{\di+\alpha\beta-1}F(s)\,\dup s \cdot \rho(x,y)^{-\di-\alpha\beta}t.
      \end{gather*}

      \item\label{mainsub2-b}
      As $\rho(x,y)t^{-1/(\alpha\beta)}\to 0$,
      \begin{gather*}
          p^S(t,x,y)
          \,\sim\,
          C_1F(0+)\frac{\Gamma\left(\frac{\di}{\alpha\smash{\beta}}\right)}{\beta\Gamma\left(\frac{\di}{\alpha}\right)}\,t^{-\di/(\alpha\beta)}.
      \end{gather*}
    \end{enumerate}
\end{theorem}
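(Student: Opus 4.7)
The plan is to use the self-similarity of the $\beta$-stable subordinator $S_t \stackrel{d}{=} t^{1/\beta} S_1$. Let $g_\beta(u) := g_\beta(1,u)$ denote the density of $S_1$; then the density of $S_t$ is $t^{-1/\beta} g_\beta(s t^{-1/\beta})$. Substituting the assumed form \eqref{kernel1} into \eqref{su11} and changing variables $s = t^{1/\beta} u$ yields
\begin{gather*}
    p^S(t,x,y) = C_1 \, t^{-\di/(\alpha\beta)} \int_0^\infty u^{-\di/\alpha} F\!\left(C_2 \, r\, u^{-1/\alpha}\right) g_\beta(u)\,\dup u,
    \qquad r := \rho(x,y)\, t^{-1/(\alpha\beta)}.
\end{gather*}
So the whole question reduces to the behavior of the single-variable function $r \mapsto J(r) := \int_0^\infty u^{-\di/\alpha} F(C_2 r u^{-1/\alpha}) g_\beta(u)\,\dup u$ as $r \to \infty$ and $r \to 0$.

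For part \eqref{mainsub2-a}, I would substitute $v = C_2 r u^{-1/\alpha}$ (so $u = (C_2 r)^\alpha v^{-\alpha}$) to rewrite
\begin{gather*}
    r^{\di+\alpha\beta} J(r) = \alpha\, C_2^{-\di-\alpha\beta} \int_0^\infty v^{\di+\alpha\beta-1} F(v) \cdot \frac{(C_2 r)^{\alpha(1+\beta)} g_\beta\!\bigl((C_2 r)^\alpha v^{-\alpha}\bigr)}{v^{\alpha(1+\beta)}}\, v^{\alpha(1+\beta)}\,\dup v
\end{gather*}
(after collecting exponents). The inner factor $s^{1+\beta} g_\beta(s)$ tends to $\beta/\Gamma(1-\beta)$ as $s \to \infty$ — this follows from the classical tail $\Pp(S_1>s) \sim s^{-\beta}/\Gamma(1-\beta)$ — so the integrand converges pointwise to $\frac{\beta}{\Gamma(1-\beta)} v^{\di+\alpha\beta-1} F(v)$. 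Multiplying through gives precisely the claimed constant $\frac{\alpha\beta}{\Gamma(1-\beta)} C_2^{-\di-\alpha\beta} \int_0^\infty s^{\di+\alpha\beta-1}F(s)\,\dup s$, and combined with the prefactor $C_1 t^{-\di/(\alpha\beta)}$ and the relation $\rho(x,y)^{-\di-\alpha\beta} t = r^{-\di-\alpha\beta} t^{-\di/(\alpha\beta)}$, we recover the stated asymptotic.

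For part \eqref{mainsub2-b}, since $F$ is non-increasing, $F(C_2 r u^{-1/\alpha}) \leq F(0+)$ uniformly in $u>0$ and $r>0$, and $F(C_2 r u^{-1/\alpha}) \to F(0+)$ pointwise as $r \to 0+$. Dominated convergence with dominating function $F(0+) u^{-\di/\alpha} g_\beta(u)$ gives
\begin{gather*}
    J(r) \to F(0+) \int_0^\infty u^{-\di/\alpha} g_\beta(u)\,\dup u = F(0+)\,\Ee\bigl[S_1^{-\di/\alpha}\bigr].
\end{gather*}
The negative moment is computed from $\Ee\,\eup^{-rS_1}=\eup^{-r^\beta}$ via $\Ee[S_1^{-q}] = \Gamma(q)^{-1}\int_0^\infty r^{q-1}\eup^{-r^\beta}\dup r = \Gamma(q/\beta)/(\beta\Gamma(q))$, giving the announced constant.

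The main technical obstacle is the dominated-convergence step in part \eqref{mainsub2-a}: one needs a uniform bound on $s^{1+\beta} g_\beta(s)$ over all $s>0$, not just asymptotically. This is standard for stable densities — near zero $g_\beta$ decays super-exponentially, and near infinity the polynomial tail controls everything — so $M := \sup_{s>0} s^{1+\beta} g_\beta(s) < \infty$. This bound makes the integrand in the transformed integral dominated by $M v^{\di+\alpha\beta-1} F(v)$, whose integrability at infinity is exactly the hypothesis of \eqref{mainsub2-a} and whose integrability near zero follows from $F$ being bounded near $0$ and $\di+\alpha\beta>0$.
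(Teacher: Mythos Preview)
Your proposal is correct and follows essentially the same route as the paper: the paper also reduces to the one-variable integral in $A=\rho(x,y)t^{-1/(\alpha\beta)}$, performs the same change of variable for part~\ref{mainsub2-a}, and passes to the limit by dominated convergence using precisely the boundedness of $s^{1+\beta}p_\beta(s)$ (stated there as boundedness of the error term $\psi_\beta$ in the factorization $p_\beta(s)=\frac{\beta}{\Gamma(1-\beta)}s^{-\beta-1}(1+\psi_\beta(s))$). For part~\ref{mainsub2-b} the paper likewise applies dominated convergence and then the negative-moment identity $\Ee S_1^{-\di/\alpha}=\Gamma(\di/(\alpha\beta))\big/\bigl(\beta\Gamma(\di/\alpha)\bigr)$, proved in a separate lemma by the same Laplace-transform computation you outline; note only that your displayed formula for $r^{\di+\alpha\beta}J(r)$ has a stray factor $v^{\alpha(1+\beta)}$ that should be dropped so that the bracketed quantity is exactly $s^{1+\beta}g_\beta(s)$ with $s=(C_2r)^\alpha v^{-\alpha}$.
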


\begin{theorem}[Asymptotics for inverse subordination]\label{mainsub}
    Assume that $p(t,x,y)$ is of the form \eqref{kernel1} and $(S^{-1}_t)_{t\geq0}$ is an inverse $\beta$-stable
    subordinator for some $\beta\in(0,1)$.
    \begin{enumerate}
    \item\label{mainsub-a}
    If $\int_1^\infty s^{\di-\alpha-1} F(s)\,\dup s<\infty$, then as $\rho(x,y)^{-\alpha/\beta}t\to\infty$,
    \begin{gather*}
        p^{S^{-1}}(t,x,y)
        \,\sim\,
        \begin{cases}
        \displaystyle
        C_1F(0+)\frac{\Gamma\left(1-\frac{\di}{\alpha}\right)}{\Gamma\left(1-\frac{\beta \di}{\alpha}\right)}\, t^{-\beta \di/\alpha},&\text{if\ } \di<\alpha,
        \\[12pt]
        \displaystyle
        C_1\frac{\beta}{\Gamma(1-\beta)}\,F(0+)\,t^{-\beta}\log\left[\rho(x,y)^{-\alpha/\beta}t\right],&\text{if\ } \di=\alpha,
        \\[12pt]
        \displaystyle
        C_1\frac{C_2^{\alpha-\di}\alpha}{\Gamma(1-\beta)} \int_0^\infty s^{\di-\alpha-1} F(s)\,\dup s\cdot \rho(x,y)^{\alpha-\di}t^{-\beta}, &\text{if\ } \di>\alpha.
        \end{cases}
    \end{gather*}

    \item\label{mainsub-b}
    If $F$ is of polynomial type \eqref{form2}, then as $\rho(x,y)^{-\alpha/\beta}t\to 0$,
    \begin{gather*}
        p^{S^{-1}}(t,x,y)
        \,\sim\,
        C_1\frac{C_2^{-\di-\alpha}}{\beta\Gamma(\beta)} \,\rho(x,y)^{-\di-\alpha}t^\beta.
    \end{gather*}

    \item\label{mainsub-c}
    If $F$ is of exponential type \eqref{form1}, then as $\rho(x,y)^{-\alpha/\beta}t\to 0$,
    \begin{gather*}
        p^{S^{-1}}(t,x,y)
        \,\sim\,
        K_1\rho(x,y)^{-\frac{\di(1-\beta)}{\alpha-\beta}} t^{-\frac{\di(\alpha-1)\beta}{\alpha(\alpha-\beta)}}
        \exp\left[-K_2\rho(x,y)^{\frac{\alpha}{\alpha-\beta}}t^{-\frac{\beta}{\alpha-\beta}}\right]
    \intertext{with the constants}
    \begin{aligned}
        K_1
        &:=
        C_1C_2^{-\frac{\di(1-\beta)}{\alpha-\beta}}\sqrt{\frac{\alpha-1}{(\alpha-\beta)\beta}}
        \;(\alpha-1)^{\frac{\di(\alpha-1)(1-\beta)}{\alpha(\alpha-\beta)}}
        \beta^{\frac{\di(\alpha-1)\beta}{\alpha(\alpha-\beta)}},
        \\
        K_2
        &:=
        C_2^{\frac{\alpha}{\alpha-\beta}}(\alpha-\beta)(\alpha-1)^{-\frac{\alpha-1}{\alpha-\beta}}\beta^{\frac{\beta}{\alpha-\beta}}.
    \end{aligned}
    \end{gather*}
    \end{enumerate}
\end{theorem}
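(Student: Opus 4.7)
The plan is to convert $p^{S^{-1}}(t,x,y)$ into a scalar integral involving the $\beta$-stable density and the profile $F$, and then apply a different asymptotic technique in each of the three regimes. Starting from the self-similarity $S^{-1}_t\stackrel{d}{=}t^\beta S^{-1}_1$, which yields $f_{S^{-1}_t}(s)=\frac{t}{\beta}s^{-1-1/\beta}g_\beta(ts^{-1/\beta})$ (where $g_\beta$ denotes the density of $S_1$), the substitutions $s=ut^\beta$ followed by $v=C_2\rho(x,y)(ut^\beta)^{-1/\alpha}$ transform \eqref{su22} into
\begin{gather*}
    p^{S^{-1}}(t,x,y)
    = \alpha C_1\, \tau\, (C_2\rho(x,y))^{-\di}
      \int_0^\infty v^{\di-\alpha-1} F(v)\, f_{S^{-1}_1}\bigl(\tau v^{-\alpha}\bigr)\,\dup v,
\end{gather*}
with natural scaling parameter $\tau:=(C_2\rho(x,y))^\alpha t^{-\beta}$. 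Part~(a) corresponds to $\tau\to 0$, parts~(b) and (c) to $\tau\to\infty$. Two properties of $S^{-1}_1$ drive the argument: the boundary value $f_{S^{-1}_1}(0+)=1/\Gamma(1-\beta)$ (a direct consequence of $g_\beta(r)\sim\frac{\beta}{\Gamma(1-\beta)}r^{-1-\beta}$ as $r\to\infty$) and the moment identity $\Ee[(S^{-1}_1)^q]=\Gamma(1+q)/\Gamma(1+\beta q)$, valid for every $q>-1$.

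For (a) the three subcases reflect the integrability behaviour near $v=0$ of the naive limit integrand $v^{\di-\alpha-1}F(v)$. When $\di>\alpha$, the assumption $\int_1^\infty s^{\di-\alpha-1}F(s)\,\dup s<\infty$ together with $F(0+)<\infty$ makes this function integrable on $(0,\infty)$, and dominated convergence replaces $f_{S^{-1}_1}(\tau v^{-\alpha})$ by $1/\Gamma(1-\beta)$, giving the third line. When $\di<\alpha$, the additional substitution $w=\tau v^{-\alpha}$ recasts the representation as $p^{S^{-1}}(t,x,y)=C_1 t^{-\beta\di/\alpha}\int_0^\infty w^{-\di/\alpha}F((\tau/w)^{1/\alpha})f_{S^{-1}_1}(w)\,\dup w$; now the limit integrand is $F(0+)w^{-\di/\alpha}f_{S^{-1}_1}(w)$, which is integrable by the moment formula at $q=-\di/\alpha\in(-1,0)$, and dominated convergence yields the first line. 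The borderline $\di=\alpha$ leaves the logarithmically divergent $\int_0^\infty w^{-1}f_{S^{-1}_1}(w)\,\dup w$; splitting the integral at the transition scale $w\sim\tau$ (below which $F((\tau/w)^{1/\alpha})$ departs from $F(0+)$) and using $f_{S^{-1}_1}(w)\to 1/\Gamma(1-\beta)$ as $w\downarrow 0$ extracts a factor $\log(1/\tau)$, which via $\log(1/\tau)=\beta\log(\rho(x,y)^{-\alpha/\beta}t)+O(1)$ produces the middle line.

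For (b) and (c) it is more convenient to work directly with $p^{S^{-1}}(t,x,y)=\int_0^\infty p(s,x,y)f_{S^{-1}_t}(s)\,\dup s$. In the polynomial case (b), $p(s,x,y)=C_1 s\bigl(s^{2/\alpha}+(C_2\rho(x,y))^2\bigr)^{-(\di+\alpha)/2}$; as $\tau\to\infty$ the bracket is dominated by $(C_2\rho(x,y))^{\di+\alpha}$ uniformly on the bulk of the support of $f_{S^{-1}_t}$, so the integrand is equivalent to $C_1 C_2^{-\di-\alpha}\rho(x,y)^{-\di-\alpha}s$. Integrating against $f_{S^{-1}_t}$ and using $\Ee[S^{-1}_t]=t^\beta/\Gamma(1+\beta)=t^\beta/(\beta\Gamma(\beta))$ gives (b); the truncation error is negligible thanks to the stretched-exponential decay of $g_\beta$ at zero. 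The exponential case (c) is a Laplace (saddle-point) computation. Inserting $F(r)=\exp[-r^{\alpha/(\alpha-1)}]$ and the short-time expansion $g_\beta(x)\sim A_\beta x^{-(2-\beta)/(2(1-\beta))}\exp[-(1-\beta)\beta^{\beta/(1-\beta)}x^{-\beta/(1-\beta)}]$ as $x\downarrow 0$ turns the integrand into an exponential with leading exponent
\begin{gather*}
    \phi(s)=(C_2\rho(x,y))^{\alpha/(\alpha-1)}s^{-1/(\alpha-1)}
    +(1-\beta)\beta^{\beta/(1-\beta)}t^{-\beta/(1-\beta)}s^{1/(1-\beta)}.
\end{gather*}
Solving $\phi'(s^*)=0$ explicitly and simplifying yields $\phi(s^*)=K_2\rho(x,y)^{\alpha/(\alpha-\beta)}t^{-\beta/(\alpha-\beta)}$ with precisely the constant $K_2$ advertised; the Gaussian width $\sqrt{2\pi/\phi''(s^*)}$ together with the algebraic prefactors $s^{-\di/\alpha}$ and $\frac{t}{\beta}s^{-1-1/\beta}A_\beta(ts^{-1/\beta})^{-(2-\beta)/(2(1-\beta))}$, both evaluated at $s^*$, assemble into the stated prefactor $K_1\rho(x,y)^{-\di(1-\beta)/(\alpha-\beta)}t^{-\di(\alpha-1)\beta/(\alpha(\alpha-\beta))}$.

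The main obstacle is the bookkeeping in (c): tracking the algebraic prefactors from $p$ together with the sub-leading factor of $g_\beta$ through the explicit value of $s^*$ is tedious, and the validity of the Gaussian approximation around the saddle has to be justified uniformly as $\tau\to\infty$ (this also relies on $\alpha\geq 2$, which is built into \eqref{form1}). Parts~(a) and (b) reduce to routine dominated-convergence arguments once the above integral representation is in place, with only the logarithmic splitting at $\di=\alpha$ requiring any care.
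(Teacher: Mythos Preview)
Your proposal is correct and follows essentially the same strategy as the paper's proof: reduce to a one-dimensional integral, handle the regular cases in~(a) and part~(b) by dominated convergence together with the fractional moment identity, treat the borderline $\di=\alpha$ by a logarithmic splitting, and settle~(c) by the Laplace method. The only cosmetic difference is the parametrization: the paper writes $p^{S^{-1}}(t,x,y)=\int_0^\infty p(t^\beta s^{-\beta},x,y)\,\dup G_\beta(s)$ and works with the density $p_\beta$ of $S_1$ and its tail asymptotic $p_\beta(s)\sim\frac{\beta}{\Gamma(1-\beta)}s^{-\beta-1}$, whereas you pass to the density $f_{S^{-1}_1}$ of the inverse and use $f_{S^{-1}_1}(0+)=1/\Gamma(1-\beta)$ and $\Ee[(S^{-1}_1)^q]=\Gamma(1+q)/\Gamma(1+\beta q)$; the two are equivalent under the substitution $s\leftrightarrow s^{-1/\beta}$ (indeed $\Ee[(S^{-1}_1)^q]=\Ee[S_1^{-\beta q}]$, matching Lemma~\ref{fmoment}). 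For~(c) the paper likewise packages the saddle-point computation and its error control into a separate Laplace-type lemma, so your ``main obstacle'' is exactly where the paper invests its technical effort.
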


\begin{remark}
    We can state Theorem~\ref{mainsub}.\ref{mainsub-c} in the following way:
    \begin{gather*}
        p^{S^{-1}}(t,x,y)
        \,\sim\,
        K_1t^{-\frac{\beta \di}{\alpha}} A^{-\frac{\di(1-\beta)}{\alpha-\beta}} \exp\left[-K_2A^{\frac{\alpha}{\alpha-\beta}}\right]
        \quad\text{as\ \ } A:=\rho(x,y)t^{-\beta/\alpha}\to\infty.
    \end{gather*}
    This shows that there exist constants $c_i=c_i(\di,\alpha,\beta)$, $i=1,2,3,4$, such that
    \begin{gather*}
        c_1t^{-\frac{\beta \di}{\alpha}} \exp\left[-c_2A^{\frac{\alpha}{\alpha-\beta}}\right]
        \leq
        p^{S^{-1}}(t,x,y)
        \leq
        c_3t^{-\frac{\beta \di}{\alpha}}\exp\left[-c_4A^{\frac{\alpha}{\alpha-\beta}}
        \right]
        \quad\text{for all\ $A\geq 1$}.
    \end{gather*}
    This is in line with the two-sided estimates for $p^{S^{-1}}(t,x,y)$ derived by Chen \emph{et al.} in \cite[Corollary 1.5\,(i)]{CKKW17}.
\end{remark}

\subsection{Time-Changed Brownian Motion}

\begin{corollary}\label{cor1}
    Assume that $p(t,x,y)$ is the Gauss kernel \eqref{gauss} and $(S_t)_{t\geq0}$ is a $\beta$-stable subordinator for some $\beta\in(0,1)$.
    \begin{enumerate}
    \item\label{cor1-a}
      As $|x-y|t^{-1/(2\beta)}\to\infty$,
      \begin{gather*}
          p^S(t,x,y)
          \,\sim\,
          \frac{\beta4^\beta\Gamma\left(\frac{\di}{2}+\beta\right)}{\pi^{\di/2}\Gamma(1-\beta)}\,|x-y|^{-\di-2\beta}t.
      \end{gather*}

      \item\label{cor1-b}
      As $|x-y|t^{-1/(2\beta)}\to 0$,
      \begin{gather*}
          p^S(t,x,y)
          \,\sim\,
          \frac{\Gamma\left(\frac{\di+1}{2}\right) \Gamma\left(\frac{\di}{2\smash{\beta}}\right)}{2\beta\pi^{(\di+1)/2}\Gamma(\di)}\,t^{-\di/(2\beta)}.
      \end{gather*}

    \item\label{cor1-c}
    As $|x-y|^{-2/\beta}t\to\infty$,
    \begin{gather*}
        p^{S^{-1}}(t,x,y)
        \,\sim\,
        \begin{cases}
            \displaystyle
            \frac{1}{2\Gamma\left(1-\frac{\beta}{2}\right)}\,t^{-\beta/2},
            &\text{if\ } \di=1,
        \\[12pt]
            \displaystyle
            \frac{\beta}{4\pi\Gamma(1-\beta)}\,t^{-\beta}\log\left[|x-y|^{-2/\beta}t\right],
            &\text{if\ } \di=2,
        \\[12pt]
            \displaystyle
            \frac{\Gamma\left(\frac{\di}{2}-1\right)}{4\pi^{\di/2}\Gamma(1-\beta)}\,|x-y|^{2-\di}t^{-\beta},&\text{if\ } \di\geq 3.
        \end{cases}
    \end{gather*}

    \item\label{cor1-d}
    As $|x-y|^{-2/\beta}t\to 0$,
    \begin{gather*}
        p^{S^{-1}}(t,x,y)
        \,\sim\,
        K_1|x-y|^{-\frac{\di(1-\beta)}{2-\beta}} t^{-\frac{\di\beta}{2(2-\beta)}}
        \exp\left[-K_2|x-y|^{\frac{2}{2-\beta}}t^{-\frac{\beta}{2-\beta}}\right],
    \intertext{with the constants}
        K_1 := \frac{1}{\sqrt{\beta(2-\beta)}}\,\pi^{-\frac{\di}{2}} 2^{-\frac{\di}{2-\beta}}\beta^{\frac{\di\beta}{2(2-\beta)}}
        \quad\text{and}\quad
        K_2 :=(2-\beta)2^{-\frac{2}{2-\beta}} \beta^{\frac{\beta}{2-\beta}}.
    \end{gather*}
    \end{enumerate}
\end{corollary}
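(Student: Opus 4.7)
The plan is to specialize Theorems \ref{mainsub2} and \ref{mainsub} to the Gauss kernel. Comparing \eqref{gauss} with \eqref{kernel1}, I read off the data $M=\real^\di$, $\rho(x,y)=|x-y|$, $\alpha=2$, $C_1=(4\pi)^{-\di/2}$, $C_2=\tfrac12$, and the exponential-type profile $F(r)=\eup^{-r^2}$ with $F(0+)=1$. Each of the four assertions \ref{cor1-a}--\ref{cor1-d} will follow by plugging these values into the corresponding asymptotic formula of Theorem~\ref{mainsub2} or Theorem~\ref{mainsub} and simplifying.

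For part \ref{cor1-a}, the integrability hypothesis $\int_1^\infty s^{\di+2\beta-1}\eup^{-s^2}\,\dup s<\infty$ is obvious, and the substitution $u=s^2$ gives $\int_0^\infty s^{\di+2\beta-1}\eup^{-s^2}\,\dup s=\tfrac12\Gamma\!\left(\tfrac{\di}{2}+\beta\right)$. Inserting this together with $C_2^{-\di-2\beta}=2^{\di+2\beta}$ into Theorem~\ref{mainsub2}.\ref{mainsub2-a} and absorbing $2^{\di+2\beta}/(4\pi)^{\di/2}=4^\beta/\pi^{\di/2}$ yields the stated expression. For part \ref{cor1-b}, I substitute $F(0+)=1$ and $C_1=(4\pi)^{-\di/2}$ into Theorem~\ref{mainsub2}.\ref{mainsub2-b}; the nontrivial step is to identify the prefactor with $\Gamma\!\left(\tfrac{\di+1}{2}\right)/\bigl(2\pi^{(\di+1)/2}\Gamma(\di)\bigr)$, which follows from the Legendre duplication formula $\Gamma(\di)=2^{\di-1}\pi^{-1/2}\Gamma(\di/2)\Gamma\bigl((\di+1)/2\bigr)$.

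Part \ref{cor1-c} is Theorem~\ref{mainsub}.\ref{mainsub-a} with $\alpha=2$, and the three cases become $\di=1$, $\di=2$, $\di\geq 3$. For $\di=1$ I use $\Gamma(1/2)=\sqrt{\pi}$ and $C_1=(4\pi)^{-1/2}=1/(2\sqrt{\pi})$; for $\di=2$ I simply multiply $C_1=(4\pi)^{-1}$ by the remaining factor; for $\di\geq3$ I need $\int_0^\infty s^{\di-3}\eup^{-s^2}\,\dup s=\tfrac12\Gamma(\di/2-1)$, and then the combination $C_2^{2-\di}\cdot 2\cdot \tfrac12/(4\pi)^{\di/2}=2^{\di-2}/(2^\di\pi^{\di/2})=1/(4\pi^{\di/2})$ gives the claimed form. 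Finally, for part \ref{cor1-d} I invoke Theorem~\ref{mainsub}.\ref{mainsub-c} with $\alpha=2$; the factor $(\alpha-1)^{\ldots}=1$ drops out, and collecting $C_1 C_2^{-\di(1-\beta)/(2-\beta)}=\pi^{-\di/2}\,2^{-\di/(2-\beta)}$ (using $-\di+\di(1-\beta)/(2-\beta)=-\di/(2-\beta)$) produces exactly $K_1$, while $C_2^{2/(2-\beta)}=2^{-2/(2-\beta)}$ gives $K_2$.

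The proof is therefore almost entirely a matter of bookkeeping: the only real work lies in the two elementary simplifications flagged above, namely the duplication-formula step in \ref{cor1-b} and the exponent arithmetic $-\di+\di(1-\beta)/(2-\beta)=-\di/(2-\beta)$ in \ref{cor1-d}. No additional analytic difficulty arises, since the hypotheses on $F$ (both the integrability $\int_1^\infty s^{\di+\alpha\beta-1}F(s)\,\dup s<\infty$ for \ref{mainsub2-a} and $\int_1^\infty s^{\di-\alpha-1}F(s)\,\dup s<\infty$ for \ref{mainsub-a} in the case $\di>2$, as well as the exponential-type requirement for \ref{mainsub-c}) are all automatic for the Gaussian profile $\eup^{-r^2}$.
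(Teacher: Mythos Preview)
Your proof is correct and follows exactly the same approach as the paper: specialize Theorems~\ref{mainsub2} and~\ref{mainsub} to the Gaussian data $\alpha=2$, $C_1=(4\pi)^{-\di/2}$, $C_2=1/2$, $F(r)=\eup^{-r^2}$, and use Legendre's duplication formula (with $z=\di/2$) for part~\ref{cor1-b}. Your write-up simply spells out the bookkeeping that the paper leaves to the reader.
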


\begin{proof}
    Corollary~\ref{cor1} follows directly from Theorem~\ref{mainsub2} and~\ref{mainsub}, respectively. For Part~\ref{cor1-b} we use  Legendre's doubling formula for the Gamma function
    \begin{gather}\label{dupli}
        \Gamma(z)
        \Gamma\left(z+\tfrac12\right)
        =2^{1-2z}\sqrt{\pi}\,\Gamma(2z),
        \quad z>0,
    \end{gather}
    with $z=\di/2$.
\end{proof}

Note that Euler's reflection formula for the Gamma function, $\Gamma(\beta)\Gamma(1-\beta) = \frac{\pi}{\sin\pi\beta}$, shows that Corollary~\ref{cor1}.\ref{cor1-a} coincides with \eqref{jh4d2}, giving an alternative
proof for \cite[Theorem 2.1]{BG60}.

\subsection{The Time-Changed Cauchy Processe}

\begin{corollary}\label{cor2}
    Assume that $p(t,x,y)$ is the Cauchy kernel \eqref{poisson} and $(S_t)_{t\geq0}$ is a $\beta$-stable subordinator for some $\beta\in(0,1)$.
    \begin{enumerate}
    \item\label{cor2-a}
    As $|x-y|t^{-1/\beta}\to\infty$,
    \begin{gather*}
        p^S(t,x,y)
        \,\sim\,
        \frac{\beta2^{\beta-1}\Gamma\left(\frac{\di+\beta}{2}\right)}{\pi^{\di/2}\Gamma\left(1-\frac\beta2\right)}\,|x-y|^{-\di-\beta}t.
    \end{gather*}

    \item\label{cor2-b}
    As $|x-y|t^{-1/\beta}\to 0$,
    \begin{gather*}
        p^S(t,x,y)
        \,\sim\,
        \frac{\Gamma\left(\frac{\di+1}{2}\right)\Gamma\left(\frac{\di}{\smash{\beta}}\right)}{\beta\pi^{(\di+1)/2}\Gamma(\di)}\,t^{-\di/\beta}.
    \end{gather*}

    \item\label{cor2-c}
    As $|x-y|^{-1/\beta}t\to\infty$,
    \begin{gather*}
        p^{S^{-1}}(t,x,y)
        \,\sim\,
        \begin{cases}
            \displaystyle
            \frac{\beta}{\pi\Gamma(1-\beta)}\,t^{-\beta}\log\left[|x-y|^{-1/\beta}t\right],&\text{if\ } \di=1,
        \\[12pt]
            \displaystyle
            \frac{\Gamma\left(\frac{\di-1}{2}\right)}{2\pi^{(\di+1)/2}\Gamma(1-\beta)}\,|x-y|^{1-\di}t^{-\beta},&\text{if\ } \di\geq 2.
        \end{cases}
    \end{gather*}

    \item\label{cor2-d}
    As $|x-y|^{-1/\beta}t\to 0$,
    \begin{gather*}
        p^{S^{-1}}(t,x,y)
        \,\sim\,
        \frac{\Gamma\left(\frac{\di+1}{2}\right)}{\pi^{(\di+1)/2}\beta\Gamma(\beta)}\,|x-y|^{-\di-1}t^\beta.
    \end{gather*}
    \end{enumerate}
\end{corollary}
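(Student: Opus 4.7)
The plan is to derive Corollary~\ref{cor2} by direct specialization of Theorems~\ref{mainsub2} and \ref{mainsub} to the Cauchy kernel \eqref{poisson}, in exactly the manner that Corollary~\ref{cor1} was derived from the Gauss kernel. First I would read off the parameters of the kernel \eqref{poisson}: in the framework of \eqref{kernel1} one has $M=\real^\di$, $\rho(x,y)=|x-y|$, $\alpha=1$, $C_1=c(\di)=\pi^{-(\di+1)/2}\Gamma\!\left(\frac{\di+1}{2}\right)$, $C_2=1$, and the polynomial profile $F(r)=(1+r^2)^{-(\di+1)/2}$ (type \eqref{form2} with $\alpha=1$), so that $F(0+)=1$. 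I would also check the two integrability conditions: $\int_1^\infty s^{\di+\beta-1}(1+s^2)^{-(\di+1)/2}\,\dup s$ behaves like $\int^\infty s^{\beta-2}\,\dup s<\infty$, and $\int_1^\infty s^{\di-2}(1+s^2)^{-(\di+1)/2}\,\dup s$ behaves like $\int^\infty s^{-3}\,\dup s<\infty$, so both hypotheses are valid.

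With these substitutions, parts \ref{cor2-a}, \ref{cor2-b}, \ref{cor2-c} (for $\di\geq 2$), and \ref{cor2-d} are immediate plug-ins into Theorem~\ref{mainsub2}.\ref{mainsub2-a}, Theorem~\ref{mainsub2}.\ref{mainsub2-b}, Theorem~\ref{mainsub}.\ref{mainsub-a}, and Theorem~\ref{mainsub}.\ref{mainsub-b}, respectively; part \ref{cor2-c} for $\di=1$ follows from the logarithmic branch of Theorem~\ref{mainsub}.\ref{mainsub-a} (since $\di=\alpha=1$). The exponential branch~\ref{mainsub-c} of Theorem~\ref{mainsub} is not needed because the Cauchy profile is polynomial. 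The only non-trivial calculation is the evaluation of the two integrals
\begin{gather*}
    I_1 := \int_0^\infty \frac{s^{\di+\beta-1}}{(1+s^2)^{(\di+1)/2}}\,\dup s,
    \qquad
    I_2 := \int_0^\infty \frac{s^{\di-2}}{(1+s^2)^{(\di+1)/2}}\,\dup s,
\end{gather*}
which arise in \ref{cor2-a} and \ref{cor2-c} (case $\di\geq 2$). Both are standard Beta integrals: the substitution $u=s^2/(1+s^2)$ yields $\int_0^\infty s^{p-1}(1+s^2)^{-q}\,\dup s = \frac{1}{2}B(p/2,q-p/2)$. With $q=(\di+1)/2$, this gives
\begin{gather*}
    I_1 = \frac{\Gamma\!\left(\frac{\di+\beta}{2}\right)\Gamma\!\left(\frac{1-\beta}{2}\right)}{2\,\Gamma\!\left(\frac{\di+1}{2}\right)},
    \qquad
    I_2 = \frac{\Gamma\!\left(\frac{\di-1}{2}\right)}{2\,\Gamma\!\left(\frac{\di+1}{2}\right)}.
\end{gather*}
Inserting $I_2$ into Theorem~\ref{mainsub}.\ref{mainsub-a} immediately produces the constant $\Gamma\!\left(\frac{\di-1}{2}\right)/\bigl(2\pi^{(\di+1)/2}\Gamma(1-\beta)\bigr)$ in \ref{cor2-c}, and part \ref{cor2-b} (using $F(0+)=1$) and part \ref{cor2-d} need only elementary simplification.

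The step that requires some care is the cosmetic reduction in \ref{cor2-a} from the factor $\Gamma\!\left(\frac{1-\beta}{2}\right)/\Gamma(1-\beta)$ to the stated form $2^{\beta-1}\sqrt{\pi}/\Gamma\!\left(1-\frac{\beta}{2}\right)$; this is exactly Legendre's duplication formula \eqref{dupli} applied with $z=\tfrac{1-\beta}{2}$, which gives
\begin{gather*}
    \Gamma\!\left(\tfrac{1-\beta}{2}\right)\Gamma\!\left(1-\tfrac{\beta}{2}\right)=2^{\beta}\sqrt{\pi}\,\Gamma(1-\beta),
\end{gather*}
and hence $\Gamma\!\left(\frac{1-\beta}{2}\right)=2^\beta\sqrt{\pi}\,\Gamma(1-\beta)/\Gamma\!\left(1-\frac\beta2\right)$. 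Combining this identity with the prefactor $C_1\beta/\Gamma(1-\beta)$ and the Beta-integral value yields exactly the constant in \ref{cor2-a}. I expect this Gamma-function bookkeeping (rather than any genuine analytic difficulty) to be the main potential pitfall, since one must keep track of the normalizing $\pi^{(\di+1)/2}$ from $c(\di)$ and the $\frac 12$ from the Beta integral; the rest of the proof is mechanical substitution.
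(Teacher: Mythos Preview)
Your proposal is correct and follows essentially the same approach as the paper: both apply Theorems~\ref{mainsub2} and~\ref{mainsub} with the Cauchy parameters $\alpha=1$, $C_1=c(\di)$, $C_2=1$, $F(r)=(1+r^2)^{-(\di+1)/2}$, evaluate the integrals in \ref{cor2-a} and \ref{cor2-c} via the Beta function, and use Legendre's duplication formula \eqref{dupli} with $z=\tfrac{1-\beta}{2}$ to simplify the constant in \ref{cor2-a}. The paper additionally remarks that \ref{cor2-a} and \ref{cor2-b} can alternatively be read off from Corollary~\ref{cor1}\,\ref{cor1-a},\,\ref{cor1-b} by replacing $\beta$ with $\beta/2$ (since the Cauchy process is Brownian motion subordinated by a $\tfrac12$-stable subordinator), but this is only offered as a cross-check, not as the main argument.
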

\begin{proof}
    All assertions follow from the respective cases in Theorems~\ref{mainsub2} and~\ref{mainsub}. For the proof of~\ref{cor2-a} and~\ref{cor2-c}, we use the well-known integral formula for Euler's Beta function
    \begin{gather*}
        B(r,s)
        = \int_0^\infty z^{r-1}(1+z)^{-r-s}\,\dup z
        = \frac{\Gamma(r)\Gamma(s)}{\Gamma(r+s)},
        \quad r,s>0.
    \end{gather*}
    Part~\ref{cor2-a} also needs \eqref{dupli} with $z= \frac 12(1-\beta)$.

    \medskip
    Alternatively, we can obtain~\ref{cor2-a} and~\ref{cor2-b} from Corollary~\ref{cor1}.\ref{cor1-a},~\ref{cor1-b} if we replace in these formulas $\beta$ by $\beta/2$. This follows from the observation that the Cauchy kernel can be obtained from the Gaussian kernel by subordination with a $\frac 12$-stable subordinator. Since the composition of a $\frac 12$-stable and a $\beta$-stable subordinator has the same probability distribution as a $\frac\beta 2$-stable subordinator,~\ref{cor2-a} and~\ref{cor2-b} are special cases of Corollary~\ref{cor1}.\ref{cor1-a},~\ref{cor1-b}.
\end{proof}

\subsection{Fractional Equations in Both Space and Time}

    We can combine the previous results which deal with space and time fractionality separately to obtain the following simultaneous space-time fractional asymptotics. As far as we are aware of, this has not yet been considered in the literature.
\begin{corollary}\label{cor3}
    Let $\beta,\gamma\in(0,1)$ and denote by $p(t,x,y)$ the fundamental solution to \eqref{frac11}.
    \begin{enumerate}
         \item\label{asswq-a}
         As $|x-y|^{-2\gamma/\beta}t\to\infty$,
    \begin{gather*}
        p(t,x,y)
        \sim
        \begin{cases}
        \displaystyle
        \frac{
        \Gamma\big(\frac{1}{2\gamma}\big)
        \Gamma\big(1-\frac{1}{2\gamma}\big)}
        {2\pi\gamma
        \Gamma\big(1-\frac{\beta}{2\gamma}\big)}
        \, t^{-\beta/(2\gamma)},
        &\text{if $n=1$ and $\gamma\in(\frac 12,1)$},
        \\[\bigskipamount]
        \displaystyle
        \frac{\beta}
        {\pi\Gamma(1-\beta)}
        \,t^{-\beta}
        \log\left[|x-y|^{-1/\beta}t\right],
        &\text{if $n=1$ and $\gamma=\frac 12$},
        \\[\bigskipamount]
        \displaystyle
        \frac{2\gamma\Gamma\big(
        \frac{n-2\gamma}{2}
        \big)}
        {2^{1+2\gamma}\pi^{n/2}\Gamma(1-\beta)\Gamma(1+\gamma)}
        \,|x-y|^{2\gamma-\di}t^{-\beta},
        &\text{if $n>2\gamma$ and $\gamma\in(0,1)$}.
        \end{cases}
    \end{gather*}

    \item\label{asswq-b}
    As $|x-y|^{-2\gamma/\beta}t\to 0$,
    \begin{gather*}
        p(t,x,y)
        \sim
        \frac{\gamma 4^\gamma \Gamma\big(\frac n2 + \gamma\big)}{\pi^{n/2}\Gamma(1-\gamma) \beta\Gamma(\beta)}\,|x-y|^{-n-2\gamma}t^{\beta}.
    \end{gather*}
\end{enumerate}
\end{corollary}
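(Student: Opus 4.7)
The fundamental solution to \eqref{frac11} admits the representation
\begin{gather*}
p(t,x,y)=\int_0^\infty p^Y(s,x,y)\,\dup_s\Pp(S^{-1}_t\leq s),
\end{gather*}
where $Y$ is a rotationally symmetric $2\gamma$-stable process (whose generator is $-(-\Delta_x)^\gamma$) and $S^{-1}$ is an independent inverse $\beta$-stable subordinator. Because $Y$ is itself Brownian motion subordinated by a $\gamma$-stable subordinator, Corollary~\ref{cor1} (with $\beta$ replaced by $\gamma$) gives the asymptotics of the base density $p^Y(s,x,y)=s^{-\di/(2\gamma)}\Phi(|x-y|s^{-1/(2\gamma)})$, namely
\begin{gather*}
\Phi(0+)=\frac{\Gamma\!\left(\frac{\di+1}{2}\right)\Gamma\!\left(\frac{\di}{2\gamma}\right)}{2\gamma\pi^{(\di+1)/2}\Gamma(\di)}\qquad\text{and}\qquad \Phi(r)\sim c_0\,r^{-\di-2\gamma}\;\;(r\to\infty),
\end{gather*}
where $c_0:=\gamma 4^\gamma\Gamma\!\left(\tfrac \di2+\gamma\right)/[\pi^{\di/2}\Gamma(1-\gamma)]$. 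The plan is thus to feed this self-similar, polynomially-tailed base into Theorem~\ref{mainsub} with parameters $\alpha=2\gamma$ and $C_1=C_2=1$.

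Part~\ref{asswq-b} follows at once from Theorem~\ref{mainsub}.\ref{mainsub-b} (polynomial profile): the tail constant $c_0$ multiplies the stated coefficient, producing $p(t,x,y)\sim (c_0/[\beta\Gamma(\beta)])\,|x-y|^{-\di-2\gamma}t^\beta$, which is exactly the claim. For part~\ref{asswq-a}, the three subcases $\di<2\gamma$, $\di=2\gamma$, $\di>2\gamma$ correspond to the three branches of Theorem~\ref{mainsub}.\ref{mainsub-a}; the first two use only $\Phi(0+)$, whose substitution (with $\Gamma(1)=1$ in the case $\di=1$) produces the first two displayed formulas after elementary Gamma-function simplification. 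The third branch requires evaluating $I:=\int_0^\infty s^{\di-2\gamma-1}\Phi(s)\,\dup s$. I would compute $I$ via the Riesz-potential identity
\begin{gather*}
\int_0^\infty p^Y(s,0,y)\,\dup s=\frac{\Gamma\!\left(\frac{\di-2\gamma}{2}\right)}{4^\gamma\pi^{\di/2}\Gamma(\gamma)}\,|y|^{2\gamma-\di},\qquad \di>2\gamma,
\end{gather*}
which expresses the Green function of $(-\Delta)^\gamma$. The change of variables $u=|y|s^{-1/(2\gamma)}$ rewrites the left-hand side as $2\gamma\,|y|^{2\gamma-\di}\int_0^\infty u^{\di-2\gamma-1}\Phi(u)\,\dup u$, yielding $I=\Gamma\!\left(\frac{\di-2\gamma}{2}\right)/[2\gamma\cdot 4^\gamma\pi^{\di/2}\Gamma(\gamma)]$; inserting this into the formula of Theorem~\ref{mainsub}.\ref{mainsub-a} and invoking $\Gamma(1+\gamma)=\gamma\Gamma(\gamma)$ reproduces the stated constant.

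The one subtle point is that the profile $\Phi$ of the $2\gamma$-stable density is \emph{not} literally of the form \eqref{form2}; however, inspection of the proof of Theorem~\ref{mainsub} shows that only the monotonicity of $F$, the value $F(0+)$, and the power-law tail $F(r)\sim c_0 r^{-\di-\alpha}$ enter the asymptotic analysis (via dominated convergence in the self-similar scaling), so the theorem applies to $\Phi$ verbatim. Establishing this extension is the main (minor) obstacle; once it is recorded, everything else reduces to bookkeeping with Euler's reflection formula and the doubling formula \eqref{dupli}.
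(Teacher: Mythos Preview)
Your approach is essentially the paper's: write the fundamental solution as $p_{2\gamma}^{S^{-1}}$ with $F(r)=p_{2\gamma}(1,r)$, read off $F(0+)$ from Corollary~\ref{cor1}.\ref{cor1-b}, and feed everything into Theorem~\ref{mainsub}. Two small differences are worth recording. First, for the integral $I=\int_0^\infty s^{\di-2\gamma-1}\Phi(s)\,\dup s$ in the case $\di>2\gamma$, the paper does not use the Riesz-potential Green function; instead it rewrites $I$ in polar coordinates as a constant times $\Ee|X_1|^{-2\gamma}$ and then invokes the moment formula of Lemma~\ref{levymoment}. Both routes give the same value, and your Green-function argument is perfectly valid. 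Second, for part~\ref{asswq-b} the paper does \emph{not} cite Theorem~\ref{mainsub}.\ref{mainsub-b} with an ``extension''; it simply repeats the short dominated-convergence computation directly for the stable profile. Your extension remark is morally right but slightly imprecise: the power-law tail $\Phi(r)\sim c_0 r^{-\di-2\gamma}$ alone gives only the pointwise limit, and for dominated convergence you also need a uniform bound such as $\sup_{r>0} r^{\di+2\gamma}\Phi(r)<\infty$ (which does hold for the stable density, by continuity and the tail asymptotic). Once you add that one line, your argument is complete.
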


The proof of Corollary~\ref{cor3} will be presented in the next section.

If we use $\gamma=1/2$ in Corollary~\ref{cor3}, we recover Corollary~\ref{cor2}.\ref{cor2-a} and \ref{cor2-b}.

\section{Proof of Theorem~\ref{mainsub2}, Theorem~\ref{mainsub} and Corollary~\ref{cor3}}\label{sec2}

For the proof of our main results we need some preparations. Let $(S_t)_{t\geq0}$ be a $\beta$-stable subordinator, $\beta\in(0,1)$. It is well known that $S_1$ has a density $p_\beta(s)$, $s>0$, with respect to Lebesgue measure; moreover, $p_\beta$ is of class $C^\infty(0,\infty)$, bounded, unimodal (i.e.\ it has a unique maximum point) and it has the following asymptotics at zero and infinity, cf.\ \cite[Theorem 4.7.1 (4.7.13) and Theorem 5.4.1]{UZ99},
\begin{gather*}
    p_\beta(s)
    \,\sim\,
    \begin{cases}
        \displaystyle
        \frac{1}{\sqrt{2\pi(1-\beta)}}\,\beta^{\frac{1}{2(1-\beta)}}s^{-\frac{2-\beta}{2(1-\beta)}}
        \exp\left[-(1-\beta) \left(\beta s^{-1}\right)^{\frac{\beta}{1-\beta}}\right],
        &\text{as\ } s\to 0,
    \\[12pt]
        \displaystyle
        \frac{\beta}{\Gamma(1-\beta)}\,s^{-\beta-1},
        &\text{as\ } s\to\infty.
    \end{cases}
\end{gather*}
This allows us to rewrite $p_\beta(s)$ for $s>0$ in the following way
\begin{align}\label{asymptotic1}
    p_\beta(s)
    &=
    \frac{1}{\sqrt{2\pi(1-\beta)}}\,\beta^{\frac{1}{2(1-\beta)}}s^{-\frac{2-\beta}{2(1-\beta)}}
    \exp\left[-(1-\beta)\left(\beta s^{-1}\right)^{\frac{\beta}{1-\beta}}\right]
    \left(1+\phi_\beta(s)\right),
    \\\label{asymptotic2}
    p_\beta(s)
    &=
    \frac{\beta}{\Gamma(1-\beta)}\,s^{-\beta-1}\left(1+\psi_\beta(s)\right),
\end{align}
where $\phi_\beta,\psi_\beta:(0,\infty)\to(-1,\infty)$ are continuous functions satisfying
\begin{gather*}
    \lim_{s\downarrow0}\phi_\beta(s)=0
    \quad\text{and}\quad
    \lim_{s\to\infty}\psi_\beta(s)=0
\intertext{and}
    \lim_{s\to\infty}s^{-\frac{\beta(2\beta-1)}{2(1-\beta)}} \left(1+\phi_\beta(s)\right)
    =
    \frac{\sqrt{2\pi(1-\beta)}}{\Gamma(1-\beta)}\,\beta^{\frac{1-2\beta}{2(1-\beta)}}.
\end{gather*}
Let us denote by $G_\beta$ the distribution function of $S_1$, i.e.
\begin{gather*}
    G_\beta(x)
    = \Pp(S_1\leq x)
    = \int_0^x p_\beta(s)\,\dup s,
    \quad x\geq 0.
\end{gather*}
Because of the scaling property of a $\beta$-stable subordinator, we have for $s,t>0$,
\begin{align*}
      \Pp\left(S_t^{-1}\leq s\right)
      = \Pp\left(S_s\geq t\right)
      = \Pp\left(s^{1/\beta} S_1\geq t\right)
      &=1-\Pp\left(S_1<s^{-1/\beta} t\right)\\
      &=1-G_\beta\left(s^{-1/\beta}t\right).
\end{align*}
Combining this with \eqref{su22}, we have
\begin{gather}\label{expression1}
      p^{S^{-1}}(t,x,y)
      = -\int_0^\infty p(s,x,y)\,\dup_sG_\beta\!\left(s^{-1/\beta}t\right)
      = \int_0^\infty p\left(t^\beta s^{-\beta},x,y\right) \dup G_\beta(s),
\intertext{and, similarly,}\label{expression2}
      p^{S}(t,x,y)
      = \int_0^\infty p(s,x,y)\,\dup_s G_\beta\!\left(t^{-1/\beta}s\right)
      = \int_0^\infty p\left(t^{1/\beta}s,x,y\right) \dup G_\beta(s).
\end{gather}

\begin{proof}[Proof of Theorem~\ref{mainsub2}]
Set $A:=\rho(x,y)t^{-1/(\alpha\beta)}$; from \eqref{expression2} and the assumption on $p(s,x,y)$ we get
\begin{align*}
    p^{S}(t,x,y)
    &= C_1 t^{-\di/(\alpha\beta)} \int_0^\infty s^{-\di/\alpha} F\bigg(C_2 \frac{\rho(x,y)}{\left(t^{1/\beta}s\right)^{1/\alpha}}\bigg)\,\dup G_\beta(s)\\
    &= C_1t^{-\di/(\alpha\beta)}\int_0^\infty s^{-\di/\alpha}F\left(C_2 As^{-1/\alpha} \right) \dup G_\beta(s).
\end{align*}

\medskip\noindent\ref{mainsub2-a}
Since $F$ is bounded, $\int_1^\infty s^{\di+\alpha\beta-1} F(s)\,\dup s<\infty$ implies that $\int_0^\infty s^{\di+\alpha\beta-1} F(s)\,\dup s<\infty$. From \eqref{asymptotic2} and the definition of $G_\beta$ we see
\begin{align*}
    p^{S}(t,x,y)
    &= \frac{C_1\beta}{\Gamma(1-\beta)} t^{-\di/(\alpha\beta)}\int_0^\infty s^{-\di/\alpha-\beta-1}
        F\left(C_2 As^{-1/\alpha} \right)\big[1+\psi_\beta(s)\big]\,\dup s\\
    &=\frac{C_1\alpha\beta}{\Gamma(1-\beta)} C_2^{-\di-\alpha\beta} \frac{t}{\rho(x,y)^{\di+\alpha\beta}}
        \int_0^\infty s^{\di+\alpha\beta-1}F(s) \left[1+\psi_\beta\left(C_2^\alpha A^\alpha s^{-\alpha}\right) \right] \dup s.
\end{align*}
Since $\psi_\beta$ is bounded on $(0,\infty)$, we may use the dominated convergence theorem to get
\begin{gather*}
    \lim_{A\to\infty}t^{-1} \rho(x,y)^{\di+\alpha\beta} p^{S}(t,x,y)
    = \frac{C_1\alpha\beta}{\Gamma(1-\beta)} C_2^{-\di-\alpha\beta} \int_0^\infty s^{\di+\alpha\beta-1}F(s)\,\dup s.
\end{gather*}

\medskip\noindent\ref{mainsub2-b}
Using the dominated convergence theorem
once again, we see
\begin{align*}
    \lim_{A\to 0}t^{\di/(\alpha\beta)} p^{S}(t,x,y)
    &= C_1\lim_{A\to 0} \int_0^\infty s^{-\di/\alpha}F\left(C_2 As^{-1/\alpha}\right) \dup G_\beta(s)\\
    &= C_1F(0+) \int_0^\infty s^{-\di/\alpha}\,\dup G_\beta(s)\\
    &= C_1F(0+) \Ee S_1^{-\di/\alpha}\\
    &\stackrel{\text{(*)}}{=} C_1F(0+)\frac{\Gamma\left(1+\frac{\di}{\alpha\smash{\beta}}\right)}{\Gamma\left(1+\frac{\di}{\alpha}\right)}
    = C_1F(0+)\frac{\Gamma\left(\frac{\di}{\alpha\smash{\beta}}\right)}{\beta\Gamma\left(\frac{\di}{\alpha}\right)};
\end{align*}
in the equality marked by an asterisk (*) we use Lemma~\ref{fmoment} from the appendix with $\kappa=-\di/\alpha$ and $t=1$. The last equality follows from the functional equation $z\Gamma(z)=\Gamma(1+z)$ for the Gamma function.
\end{proof}

\begin{proof}[Proof of Theorem~\ref{mainsub}]
Define $A:=\rho(x,y)^{-\alpha/\beta}t$. Using \eqref{expression1} we get
\begin{gather}\label{expression}
\begin{aligned}
    p^{S^{-1}}(t,x,y)
    &= C_1 t^{-\beta \di/\alpha}
        \int_0^\infty s^{\beta \di/\alpha}F\bigg(C_2 \frac{\rho(x,y)}{\left(t^\beta s^{-\beta}\right)^{1/\alpha}}\bigg)\,\dup G_\beta(s)\\
    &= C_1t^{-\beta \di/\alpha}\int_0^\infty s^{\beta \di/\alpha}F\bigg(C_2 \left(\frac{s}{A}\right)^{\beta/\alpha}\bigg)\,\dup G_\beta(s).
\end{aligned}
\end{gather}

\medskip\noindent\ref{mainsub-a}
We begin with the asymptotics for $A\to\infty$.
\begin{description}
\item[\normalfont Case 1: $\di<\alpha$]
    If we use in \eqref{expression} the monotone convergence theorem and Lemma~\ref{fmoment} from the appendix, we obtain
    \begin{align*}
        \lim_{A\to\infty} t^{\beta \di/\alpha}p^{S^{-1}}(t,x,y)
        &= C_1\lim_{A\to\infty} \int_0^\infty s^{\beta \di/\alpha}F\bigg(C_2\left(\frac{s}{A}\right)^{\beta/\alpha}\bigg)\,\dup G_\beta(s)\\
        &= C_1F(0+)\int_0^\infty s^{\beta \di/\alpha}\,\dup G_\beta(s)\\
        &= C_1F(0+)\Ee S_1^{\beta \di/\alpha}\\
        &= C_1F(0+)\frac{\Gamma\left(1-\frac{\di}{\alpha}\right)}{\Gamma\left(1-\frac{\beta \di}{\alpha}\right)}.
    \end{align*}

\item[\normalfont Case 2: $\di=\alpha$]
    We know that $\int_1^\infty s^{-1} F(s)\,\dup s<\infty$. Inserting \eqref{asymptotic2} into \eqref{expression} yields
    \begin{align*}
        \frac{t^{\beta}}{\log A}p^{S^{-1}}(t,x,y)
        &= \frac{C_1}{\log A} \int_0^\infty s^{\beta} F\bigg(C_2\left(\frac{s}{A}\right)^{\beta/\alpha}\bigg)\,\dup G_\beta(s)\\
        &=\frac{C_1}{\log A}\big(I_1(A,\alpha,\beta) + I_2(A,\alpha,\beta) + I_3(A,\alpha,\beta)\big)
    \end{align*}
    where
    \begin{align*}
        I_1(A,\alpha,\beta)
        :=&\int_0^1 s^{\beta}F\bigg(C_2\left(\frac{s}{A}\right)^{\beta/\alpha}\bigg) p_\beta(s)\,\dup s\\
        \leq& F(0+)\int_0^1 s^{\beta} p_\beta(s)\,\dup s
        \;\leq\; F(0+),
    \end{align*}
    \begin{align*}
        I_2(A,\alpha,\beta)
        :=& \frac{\beta}{\Gamma(1-\beta)} \int_1^\infty s^{-1} F\bigg(C_2\left(\frac{s}{A}\right)^{\beta/\alpha}\bigg)\,\dup s\\
        =& \frac{\alpha}{\Gamma(1-\beta)} \int_{C_2A^{-\beta/\alpha}}^\infty s^{-1} F(s)\,\dup s,
    \end{align*}
    and
    \begin{align*}
        I_3(A,\alpha,\beta)
        :=& \frac{\beta}{\Gamma(1-\beta)} \int_1^\infty s^{-1} F\bigg(C_2\left(\frac{s}{A}\right)^{\beta/\alpha}\bigg) \psi_\beta(s)\,\dup s\\
        =& \frac{\alpha}{\Gamma(1-\beta)} \int_{C_2A^{-\beta/\alpha}}^\infty s^{-1} F(s)\psi_\beta\left(C_2^{-\alpha/\beta} As^{\alpha/\beta}\right) \dup s.
    \end{align*}
    Now we can use Lemma~\ref{j32c} from the appendix to get
    \begin{align*}
        \lim_{A\to\infty} \frac{t^{\beta}}{\log A}p^{S^{-1}}(t,x,y)
        &= C_1\left(0+\frac{\alpha}{\Gamma(1-\beta)} \cdot \frac\beta\alpha\,F(0+) + \frac{\alpha}{\Gamma(1-\beta)}\cdot 0\right)\\
        &= \frac{C_1\beta}{\Gamma(1-\beta)}\,F(0+).
    \end{align*}

\item[\normalfont Case 3: $\di>\alpha$]
    We know that $\int_1^\infty s^{\di-\alpha-1} F(s)\,\dup s<\infty$. Since $F$ is bounded, this yields $\int_0^\infty s^{\di-\alpha-1}F(s)\,\dup s<\infty$. Using \eqref{expression} and \eqref{asymptotic2} we get
    \begin{align*}
        &\rho(x,y)^{\di-\alpha}t^\beta p^{S^{-1}}(t,x,y)\\
        &\quad= \frac{C_1\beta}{\Gamma(1-\beta)} A^{\beta(1-\di/\alpha)} \int_0^\infty s^{\beta \di/\alpha-1-\beta}
        F\bigg(C_2\left(\frac{s}{A}\right)^{\beta/\alpha}\bigg)\big[1+\psi_\beta(s)\big]\,\dup s\\
        &\quad= \frac{C_1C_2^{\alpha-\di}\alpha}{\Gamma(1-\beta)}
        \int_0^\infty s^{\di-\alpha-1}F(s)\left[1+\psi_\beta\left(C_2^{-\alpha/\beta}As^{\alpha/\beta}\right)\right] \dup s.
    \end{align*}
    Since $\psi_\beta$ is bounded on $(0,\infty)$, the dominated convergence theorem gives
    \begin{gather*}
        \lim_{A\to\infty}\int_0^\infty s^{\di-\alpha-1} F(s)\psi_\beta\left(C_2^{-\alpha/\beta} As^{\alpha/\beta}\right) \dup s
        =0.
    \end{gather*}
    Therefore, we have
    \begin{gather*}
        \lim_{A\to\infty} \rho(x,y)^{\di-\alpha}t^\beta p^{S^{-1}}(t,x,y)
        = C_1\frac{C_2^{\alpha-\di}\alpha}{\Gamma(1-\beta)} \int_0^\infty s^{\di-\alpha-1} F(s)\,\dup s.
    \end{gather*}
\end{description}

\medskip\noindent\ref{mainsub-b}
    Now we consider the limit $A\to 0$ for the profile $F(r)=(1+r^2)^{-(\di+\alpha)/2}$ where $\alpha>0$. Applying in \eqref{expression} the dominated
    convergence theorem and Lemma~\ref{fmoment} from the appendix gives
    \begin{align*}
        &\lim_{A\to 0} \rho(x,y)^{\di+\alpha}t^{-\beta} p^{S^{-1}}(t,x,y)\\
        &\quad= C_1\lim_{A\to 0} A^{-\frac{\beta \di}{\alpha}-\beta}
        \int_0^\infty s^{\beta \di/\alpha}\bigg(1+C_2^2\left(\frac{s}{A}\right)^{2\beta/\alpha}\bigg)^{-(\di+\alpha)/2}\,\dup G_\beta(s)\\
        &\quad= C_1\lim_{A\to 0} \int_0^\infty s^{\beta \di/\alpha}\left(A^{2\beta/\alpha}+C_2^2s^{2\beta/\alpha}\right)^{-(\di+\alpha)/2}\,\dup G_\beta(s)\\
        &\quad= C_1C_2^{-\di-\alpha} \int_0^\infty s^{-\beta}\,\dup G_\beta(s)\\
        &\quad= C_1C_2^{-\di-\alpha}\Ee S_1^{-\beta}
        = C_1C_2^{-\di-\alpha} \frac{\Gamma(2)}{\Gamma(1+\beta)}
        =\frac{C_1C_2^{-\di-\alpha}}{\beta\Gamma(\beta)}.
    \end{align*}

\medskip\noindent\ref{mainsub-c}
    Finally we consider $A\to 0$ for the profile $F(r)=\exp\left[-r^{\alpha/(\alpha-1)}\right]$ with $\alpha\geq 2$. Combining \eqref{expression} and \eqref{asymptotic1} yields
    \begin{align*}
        p^{S^{-1}}(t,x,y)
        &= C_1t^{-\frac{\beta \di}{\alpha}} \int_0^\infty s^{\frac{\beta \di}{\alpha}}
        \exp\left[-C_2^{\frac{\alpha}{\alpha-1}} A^{-\frac{\beta}{\alpha-1}}s^{\frac{\beta}{\alpha-1}}\right]p_\beta(s)\,\dup s\\
        &= \frac{C_1}{\sqrt{2\pi(1-\beta)}} \beta^{\frac{1}{2(1-\beta)}} t^{-\frac{\beta \di}{\alpha}}
        \int_0^\infty s^{\frac{\beta \di}{\alpha} -\frac{2-\beta}{2(1-\beta)}}\times\mbox{}\\
        &\qquad\quad\mbox{}\times\exp\left[-C_2^{\frac{\alpha}{\alpha-1}} A^{-\frac{\beta}{\alpha-1}} s^{\frac{\beta}{\alpha-1}}-(1-\beta)\beta^{\frac{\beta}{1-\beta}} s^{-\frac{\beta}{1-\beta}} \right]\big(1+\phi_\beta(s)\big)\,\dup s.
    \end{align*}
    The claim follows with Lemma~\ref{sjg44k} from the appendix.
\end{proof}

\begin{proof}[Proof of Corollary~\ref{cor3}]
    In abuse of notation we denote by $p_{2\gamma}(t,x,y)=p_{2\gamma}(t,|x-y|)$ the heat kernel of the $n$-dimensional rotationally symmetric
    $2\gamma$-stable L\'{e}vy process $X$. We know that the fundamental solution to \eqref{frac11} can be written as $p(t,x,y)=p_{2\gamma}^{S^{-1}}(t,x,y)$, where $S$ is a $\beta$-stable subordinator which is independent of $X$. On the other hand, it follows from the scaling property that
    \begin{gather*}
        p_{2\gamma}(t,x,y)=t^{-n/(2\gamma)}
        p_{2\gamma}(1,t^{-1/(2\gamma)}|x-y|),
    \end{gather*}
    which yields that $p_{2\gamma}(t,x,y)$ is
    of the form \eqref{kernel1} with
    $M=\real^n$, $\rho(x,y)=|x-y|$, $C_1=C_2=1$, $\alpha=2\gamma$, and
    $F(r)=p_{2\gamma}(1,r)$.

\medskip\noindent\ref{asswq-a}
    Using Corollary~\ref{cor1}.\ref{cor1-b} with
    $t=1$ and $\beta$ replaced by $\gamma$, we find that
    \begin{gather*}
        F(0+)=p_{2\gamma}(1,0+)=
        \frac{\Gamma\big(\frac{\di+1}{2}\big) \Gamma\big(\frac{\di}{2\gamma}\big)}
        {2\gamma\pi^{(\di+1)/2}\Gamma(\di)}.
    \end{gather*}
    Moreover,
    \begin{gather*}
        \Ee|X_1|^{-2\gamma}=
        \int_{\real^n}|x|^{-2\gamma}
        p_{2\gamma}(1,|x|)\,\dup x
        =\frac{2\pi^{n/2}}{\Gamma\left(\frac{n}{2}\right)}
        \int_0^\infty s^{n-2\gamma-1}p_{2\gamma}(1,s)
        \,\dup s,
    \end{gather*}
    which, together with the moment formula for
    stable L\'{e}vy processes in Lemma~\ref{levymoment}
    in the appendix, implies that
    \begin{gather*}
        \int_0^\infty s^{n-2\gamma-1}F(s)\,\dup s
        =\frac{\Gamma\left(\frac{n}{2}\right)}
        {2\pi^{n/2}}\,\Ee|X_1|^{-2\gamma}
        =\frac{\Gamma\left(
        \frac{n-2\gamma}{2}
        \right)}{2^{1+2\gamma}\pi^{n/2}\Gamma(1+\gamma)}.
    \end{gather*}
    It remains to apply Theorem~\ref{mainsub}.\ref{mainsub-a}
    to get the first asymptotic formula.

\medskip\noindent\ref{asswq-b}
    Applying Corollary~\ref{cor1}\,\ref{cor1-a}
    with $t=1$ and $\beta$ replaced by $\gamma$, we have
    \begin{gather*}
        p_{2\gamma}(1,r) \sim \frac{\gamma 4^\gamma \Gamma\left(\frac n2+\gamma\right)}{\pi^{n/2}\Gamma(1-\gamma)} \,r^{-n-2\gamma}\quad \text{as $r\rightarrow\infty$}.
    \end{gather*}
    Let $A:=|x-y|^{-2\gamma/\beta}t$. Then it holds from
    \eqref{expression} and the dominated convergence
    theorem that
    \begin{align*}
        &\lim_{A\rightarrow0}|x-y|^{n+2\gamma} t^{-\beta} p(t,x,y)\\
        &\quad\quad=\lim_{A\rightarrow0}|x-y|^{n+2\gamma} t^{-\beta} p^{S^{-1}}_{2\gamma}(t,x,y)\\
        &\quad\quad=\lim_{A\rightarrow0}|x-y|^{n+2\gamma}
        t^{-\beta-\beta n/(2\gamma)}\int_0^\infty
        s^{\beta n/(2\gamma)}p_{2\gamma}\left(1,
        \left(\frac{s}{A}\right)^{\beta/(2\gamma)}
        \right)\,\dup G_\beta(s)\\
        &\quad\quad=\lim_{A\rightarrow0}\int_0^\infty
        s^{-\beta}
        \left(\frac{s}{A}\right)^{(n+2\gamma)\beta/(2\gamma)}
        p_{2\gamma}\left(1,
        \left(\frac{s}{A}\right)^{\beta/(2\gamma)}
        \right)\,\dup G_\beta(s)\\
        &\quad\quad=\frac{\gamma 4^\gamma \Gamma\left(\frac n2+\gamma\right)}{\pi^{n/2}\Gamma(1-\gamma)}
        \int_0^\infty
        s^{-\beta}\,\dup G_\beta(s)\\
        &\quad\quad=\frac{\gamma 4^\gamma \Gamma\left(\frac n2+\gamma\right)}{\pi^{n/2}\Gamma(1-\gamma)}
        \,\Ee S_1^{-\beta}.
    \end{align*}
    Combining this with Lemma~\ref{fmoment} in the appendix,
    we obtain the second formula.
\end{proof}

\section{Appendix}\label{app}

We will need a moment formula for stable subordinators which can be found in Sato~\cite[Eq.\ (25.5), p.\ 162]{Sat99} (without proof but references to the literature). The following short and straightforward derivation seems to be new.

\begin{lemma}\label{fmoment}
    The moments of order $\kappa\in(-\infty,\beta)$ of a $\beta$-stable subordinator $(S_t)_{t\geq 0}$ exist and are given by
    \begin{gather*}
        \Ee S_t^\kappa
        =
        \frac{\Gamma\left(1-\frac\kappa{\smash{\beta}}\right)}
        {\Gamma(1-\kappa)}\,t^{\kappa/\beta},
        \quad t>0.
    \end{gather*}
\end{lemma}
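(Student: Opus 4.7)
The plan is first to reduce to the case $t=1$ via the scaling property of a $\beta$-stable subordinator: since $S_t\stackrel{d}{=}t^{1/\beta}S_1$, the formula for general $t>0$ follows at once from the case $t=1$. So the entire task is to compute $\Ee S_1^\kappa$ using the defining Laplace transform $\Ee\eup^{-rS_1}=\eup^{-r^\beta}$.

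Next, I would split the computation according to the sign of $\kappa$. For $\kappa<0$, the Frullani-type identity
\begin{gather*}
    x^\kappa
    = \frac{1}{\Gamma(-\kappa)}\int_0^\infty r^{-\kappa-1}\eup^{-rx}\,\dup r,\qquad x>0,
\end{gather*}
combined with Tonelli's theorem (both sides are positive) gives
\begin{gather*}
    \Ee S_1^\kappa
    = \frac{1}{\Gamma(-\kappa)}\int_0^\infty r^{-\kappa-1}\Ee\eup^{-rS_1}\,\dup r
    = \frac{1}{\Gamma(-\kappa)}\int_0^\infty r^{-\kappa-1}\eup^{-r^\beta}\,\dup r.
\end{gather*}
The substitution $u=r^\beta$ turns this into $\frac{1}{\beta\Gamma(-\kappa)}\int_0^\infty u^{-\kappa/\beta-1}\eup^{-u}\,\dup u = \frac{\Gamma(-\kappa/\beta)}{\beta\Gamma(-\kappa)}$, and the functional equation $\Gamma(1+z)=z\Gamma(z)$ applied to $z=-\kappa$ and $z=-\kappa/\beta$ rewrites this as $\Gamma(1-\kappa/\beta)/\Gamma(1-\kappa)$, which is the claim.

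For $0<\kappa<\beta$ (note $\kappa/\beta<1$), I would use the complementary identity
\begin{gather*}
    x^\kappa
    = \frac{\kappa}{\Gamma(1-\kappa)}\int_0^\infty (1-\eup^{-rx})\,r^{-\kappa-1}\,\dup r,\qquad x>0,
\end{gather*}
valid for $0<\kappa<1$. Applying Tonelli once more (the integrand is again non-negative), then using $\Ee\eup^{-rS_1}=\eup^{-r^\beta}$ and the same substitution $u=r^\beta$ reduces the computation to the standard integral $\int_0^\infty(1-\eup^{-u})u^{-\kappa/\beta-1}\,\dup u = \beta\Gamma(1-\kappa/\beta)/\kappa$ (which itself follows by one integration by parts). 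Assembling the pre-factors yields the same formula $\Gamma(1-\kappa/\beta)/\Gamma(1-\kappa)$, and the case $\kappa=0$ is trivial.

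The only delicate point, and thus the main obstacle, is verifying integrability so that the use of Tonelli/Fubini is legitimate at each endpoint: for the $\kappa<0$ case one needs $r^{-\kappa-1}\eup^{-r^\beta}$ to be integrable near $0$ and $\infty$, which requires $-\kappa-1>-1$ (i.e.\ $\kappa<0$) near $0$ and is automatic near $\infty$ from the exponential decay; for the $0<\kappa<\beta$ case the factor $1-\eup^{-r^\beta}$ behaves like $r^\beta$ near $0$, so $(1-\eup^{-r^\beta})r^{-\kappa-1}$ is integrable near $0$ exactly when $\beta-\kappa-1>-1$, i.e.\ $\kappa<\beta$, while integrability at $\infty$ needs $\kappa>0$. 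These are precisely the hypotheses of the lemma, so the range $\kappa\in(-\infty,\beta)$ is exactly what the two identities can handle.
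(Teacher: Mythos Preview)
Your proof is correct and follows essentially the same route as the paper's: reduce to $t=1$ by scaling, use the Gamma-integral representation $x^{-r}=\frac{1}{\Gamma(r)}\int_0^\infty \eup^{-\lambda x}\lambda^{r-1}\,\dup\lambda$ for negative $\kappa$, and the companion formula $x^{r}=\frac{r}{\Gamma(1-r)}\int_0^\infty(1-\eup^{-\lambda x})\lambda^{-r-1}\,\dup\lambda$ for $0<\kappa<\beta$, then swap expectation and integral via Tonelli and substitute $u=r^\beta$. Your explicit discussion of the integrability conditions is a welcome addition but does not change the argument.
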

\begin{proof}
    Since $S_t$ has the same probability distribution as $t^{1/\beta}S_1$, it is enough to consider $t=1$. Recall that the Laplace transform of $S_1$ is $\Ee\,\eup^{- t S_1} = \eup^{-t^\beta}$, $t>0$. Substituting $\lambda=S_1$ in the well-known formula \cite[p.~vii]{SSV}
    \begin{gather*}
        \lambda^{-r} = \frac 1{\Gamma(r)}\int_0^\infty \eup^{-\lambda x} x^{r-1}\,\dup x,\quad \lambda > 0,\; r>0,
    \end{gather*}
    and taking expectations yields, because of Tonelli's theorem,
    \begin{gather*}
        \Ee S_1^{-r}
        = \frac 1{\Gamma(r)}\int_0^\infty \Ee\,\eup^{-x S_1} x^{r-1}\,\dup x
        = \frac 1{\Gamma(r)}\int_0^\infty \eup^{-x^\beta} x^{r}\,\frac{\dup x}{x}.
    \end{gather*}
    Now we change variables according to $y=x^\beta$, and get
    \begin{gather*}
        \Ee S_1^{-r}
        = \frac 1{\Gamma(r)} \frac 1\beta\int_0^\infty \eup^{-y} y^{\frac r\beta}\,\frac{\dup y}{y}
        = \frac{1}{r\Gamma(r)} \cdot \frac r\beta\,\Gamma\left(\frac r{\smash{\beta}}\right)
        = \frac{\Gamma\left(1+\frac{r}{\smash{\beta}}\right)}{\Gamma(1+r)}.
    \end{gather*}
    Setting $\kappa = -r$ proves the assertion for $\kappa\in (-\infty,0)$. Note that this formula extends (analytically) to $-r = \kappa < \beta$. Alternatively, use the very same calculation and the formula \cite[p.~vii]{SSV}
    \begin{equation}\label{hg43df}
        \lambda^{r} = \frac r{\Gamma(1-r)}\int_0^\infty \left(1-\eup^{-\lambda x}\right) x^{-r-1}\,\dup x,\quad \lambda > 0,\; r\in (0,1),
    \end{equation}
    to get the assertion for $\kappa\in (0,\beta)$.
\end{proof}

    The following theorem is known in the literature in dimension $n=1$, see \cite[p.~163]{Sat99}. The multivariate setting and the short proof via subordination are new.

\begin{lemma}\label{levymoment}
    Let $(X_t)_{t\geq 0}$ be a rotationally symmetric $\alpha$-stable L\'{e}vy process on $\real^n$ with $0<\alpha<2$. For any $\kappa\in(-n,\alpha)$,
    \begin{gather*}
        \Ee|X_t|^\kappa
        =
        \frac{2^\kappa
        \Gamma\left(\frac{n+\kappa}{2}\right)
        \Gamma\left(1-\frac{\kappa}{\alpha}\right)}
        {\Gamma\left(\frac{n}{2}\right)
        \Gamma\left(1-\frac{\kappa}{2}\right)
        }
        \,t^{\kappa/\alpha},
        \quad t>0.
    \end{gather*}
    If $\kappa \leq -n$ or $\kappa\geq\alpha$ the moments are infinite.
\end{lemma}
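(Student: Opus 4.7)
The plan is to exploit the standard subordination representation $X_t \stackrel{d}{=} B_{2S_t}$, where $(B_s)_{s\geq 0}$ is a standard Brownian motion on $\real^n$ and $(S_t)_{t\geq 0}$ is an independent $(\alpha/2)$-stable subordinator. The scaling constant $c=2$ is pinned down by a one-line characteristic-function check,
\begin{gather*}
    \Ee\,\eup^{\iup\xi\cdot B_{cS_t}}
    = \Ee\,\eup^{-c|\xi|^2 S_t/2}
    = \exp\!\left[-t\,(c/2)^{\alpha/2}|\xi|^\alpha\right],
\end{gather*}
which matches $\exp(-t|\xi|^\alpha)$ precisely for $c=2$.

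With this representation in hand, I would condition on $S_t$: given $S_t=s$, the random variable $|B_{2s}|$ equals in law $\sqrt{2s}\,|Z|$ for a standard $n$-dimensional Gaussian vector $Z$, so independence of $B$ and $S$ yields the factorisation
\begin{gather*}
    \Ee|X_t|^\kappa
    = 2^{\kappa/2}\,\Ee S_t^{\kappa/2}\cdot\Ee|Z|^\kappa.
\end{gather*}
The first factor is handed over to Lemma~\ref{fmoment}, applied with stability index $\alpha/2$ and moment order $\kappa/2$: this is finite exactly for $\kappa<\alpha$ and equals $\Gamma(1-\kappa/\alpha)/\Gamma(1-\kappa/2)\cdot t^{\kappa/\alpha}$. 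The second is a classical polar-coordinates calculation; passing to spherical coordinates and substituting $u=r^2/2$ produces $\Ee|Z|^\kappa = 2^{\kappa/2}\Gamma((n+\kappa)/2)/\Gamma(n/2)$, finite precisely for $\kappa>-n$.

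Multiplying the two factors and collecting the powers of $2$ delivers exactly the asserted closed form. For the divergence statement, when $\kappa\geq\alpha$ the subordinator moment blows up, and when $\kappa\leq -n$ the Gaussian moment blows up; since both factors are positive and independent, $\Ee|X_t|^\kappa=\infty$ whenever either of them is infinite. The whole argument is essentially mechanical once the subordination step is in place, and the only non-routine step is the identification of the scaling constant $c=2$.
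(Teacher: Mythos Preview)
Your proof is correct and follows the same route as the paper: represent $X_t$ via Bochner subordination of Brownian motion by an independent $\alpha/2$-stable subordinator, compute the Gaussian moment in polar coordinates, and invoke Lemma~\ref{fmoment} for the subordinator moment. The only cosmetic differences are that you use the standard Brownian normalisation (hence the extra factor $c=2$), whereas the paper works with the Brownian motion whose generator is $\Delta$ and so avoids the rescaling; and for the divergence statement the paper cites an external reference while you argue it directly from the Tonelli factorisation, which is perfectly fine and in fact more self-contained.
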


\begin{proof}
    Let $(B_t)_{t\geq0}$ be a Brownian motion on $\real^n$ (starting from zero) with transition probability
    density given by \eqref{gauss}, and $(S_t)_{t\geq0}$ be an independent $\alpha/2$-stable
    subordinator, that is an increasing L\'evy process. From Bochner's subordination is well known that the
    time-changed process $B_{S_t}$, $t\geq0$, is a rotationally symmetric $\alpha$-stable L\'{e}vy process on $\real^n$.

    For any $\kappa>-n$ and $t>0$, we have
    \begin{align*}
        \Ee|B_t|^\kappa
        &= \frac{1}{(4\pi t)^{n/2}}\int_{\real^n} |x|^\kappa\exp\left[-\frac{|x|^2}{4t}\right] \dup x\\
        &=\frac{2^{1-n}t^{-n/2}}{\Gamma\left(\frac{n}{2}\right)}\int_0^\infty r^{n+\kappa-1}\exp\left[-\frac{r^2}{4t}\right] \dup r
        =\frac{2^\kappa\Gamma\left(\frac{n+\kappa}{2}\right)}{\Gamma\left(\frac{n}{2}\right)}\,t^{\kappa/2}.
    \end{align*}
    Let $\Ee^B$ and $\Ee^S$ denote the expectations w.r.t.\ $(B_t)_{t\geq0}$ and $(S_t)_{t\geq0}$, respectively. Using Lemma~\ref{fmoment},
    we obtain that for any $\kappa\in(-n,\alpha)$ and $t>0$,
    \begin{gather*}
        \Ee|B_{S_t}|^\kappa
        = \Ee^S\left[\Ee^B|B_{S_t}|^\kappa\right]
        = \frac{2^\kappa\Gamma\left(\frac{n+\kappa}{2}\right)}{\Gamma\left(\frac{n}{2}\right)} \, \Ee S_t^{\kappa/2}
        = \frac{2^\kappa\Gamma\left(\frac{n+\kappa}{2}\right)\Gamma\left(1-\frac{\kappa}{\alpha}\right)}%
        {\Gamma\left(\frac{n}{2}\right)\Gamma\left(1-\frac{\kappa}{2}\right)}\,t^{\kappa/\alpha}.
    \end{gather*}

If $\kappa\leq-n$ or $\kappa\geq\alpha$, we have $\Ee|X_t|^\kappa=\infty$, see \cite[Theorem 3.1.e) and Remark 3.2.d)]{deng-schi15}.
\end{proof}

\begin{remark}
    We want to sketch another, slightly more general proof of Lemma~\ref{levymoment} which avoids the subordination argument. Combining the well-known formulas
    \begin{align*}
        |x|^\kappa
        &= \frac{\kappa2^{\kappa-1}\Gamma\left(\frac{n+\kappa}{2}\right)}{\pi^{n/2}\Gamma\left(1-\frac{\kappa}{2}\right)}
        \int_{\real^n\setminus\{0\}} \big(1-\cos(x\cdot\xi)\big)|\xi|^{-\kappa-n}\,\dup\xi,\quad \kappa\in (0,2),\\
        |x|^\kappa
        &=
        \frac{2^{\kappa}\Gamma\left(\frac{n+\kappa}{2}\right)}{\pi^{n/2}\Gamma\left(-\frac{\kappa}{2}\right)}
        \int_{\real^n\setminus\{0\}}|\xi|^{-n-\kappa}
        \eup^{-\mathrm{i}x\cdot\xi}\,\dup\xi,\quad \kappa\in(-n,0)
    \end{align*}
    (the second formula is to be understood in the sense of L.\ Schwartz distributions) with an Abel-type convergence factor argument and  Fubini's theorem, also yields the moment formula of Lemma~\ref{levymoment}.
\end{remark}

\begin{lemma}\label{j32c}
    Let $\psi:[1,\infty)\to\real$ be a bounded function such that $\lim_{s\to\infty}\psi(s) = 0$ and $\omega:[0,\infty)\to(0,\infty)$ a non-increasing function satisfying $\int_1^\infty s^{-1}\omega(s)\,\dup s<\infty$. For any $c>0$ and $\delta>0$ one has
    \begin{gather*}
        \lim_{A\to\infty}\frac{1}{\log A}\int_{cA^{-\delta}}^\infty s^{-1}\omega(s)\,\dup s
        = \delta\omega(0+)
    \intertext{and}
        \lim_{A\to\infty}\frac{1}{\log A} \int_{cA^{-\delta}}^\infty s^{-1}\omega(s)\psi\!\left(c^{-1/\delta}As^{1/\delta}\right) \dup s
        = 0.
    \end{gather*}
\end{lemma}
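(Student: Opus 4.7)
My plan is to treat the two limits in Lemma~\ref{j32c} separately, using the first as an input for the second.

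For the first limit, I would split the integral $\int_{cA^{-\delta}}^\infty s^{-1}\omega(s)\,\dup s$ at a fixed threshold $\eta\in(0,1)$. The tail $\int_\eta^\infty s^{-1}\omega(s)\,\dup s$ is finite (the integrand is bounded by $\omega(\eta)/s$ on $[\eta,1]$ by monotonicity, while convergence on $[1,\infty)$ is assumed), hence negligible after division by $\log A$. On $[cA^{-\delta},\eta]$ monotonicity of $\omega$ yields the two-sided estimate
\begin{gather*}
    \omega(\eta)\left(\delta\log A + \log\frac{\eta}{c}\right)
    \leq \int_{cA^{-\delta}}^\eta s^{-1}\omega(s)\,\dup s
    \leq \omega(cA^{-\delta})\left(\delta\log A + \log\frac{\eta}{c}\right).
\end{gather*}
Dividing by $\log A$ and letting $A\to\infty$, while using $\omega(cA^{-\delta})\to\omega(0+)$ (the limit exists in $(0,\infty]$ by monotonicity), then $\eta\downarrow 0$, squeezes both sides to $\delta\omega(0+)$.

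For the second limit, fix $\varepsilon>0$ and pick $M\geq 1$ large enough that $|\psi(u)|\leq\varepsilon$ for all $u\geq M$. Since $c^{-1/\delta}As^{1/\delta}\geq M$ is equivalent to $s\geq M^\delta cA^{-\delta}$, I would split the integral at $s=M^\delta cA^{-\delta}$. On the lower piece $[cA^{-\delta},M^\delta cA^{-\delta}]$, bound $|\psi|$ by $\|\psi\|_\infty$ and $\omega(s)$ by the constant $\omega(cA^{-\delta})$; the remaining integral of $s^{-1}$ equals $\delta\log M$, which is independent of $A$, so this contribution is $O(1)$ and disappears after division by $\log A$. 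On the upper piece $[M^\delta cA^{-\delta},\infty)$, bound $|\psi|$ by $\varepsilon$ and apply the first part of the lemma (with $c$ replaced by $M^\delta c$) to conclude that $(\log A)^{-1}$ times that integral converges to $\varepsilon\delta\omega(0+)$. Since $\varepsilon$ is arbitrary, the desired limit is $0$.

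The main subtlety is the lower piece in the second limit: because the argument $c^{-1/\delta}As^{1/\delta}$ of $\psi$ is only of order unity when $s$ is near $cA^{-\delta}$, there is no way to force $|\psi|$ to be small there. The remedy is that the split must be made $A$-dependent, with the smallness coming from the fact that $[cA^{-\delta},M^\delta cA^{-\delta}]$ has length $O(1)$ on the logarithmic scale, while the window of the logarithmic divergence $\delta\log A$ sits entirely in the upper piece where $|\psi|$ can be made arbitrarily small.
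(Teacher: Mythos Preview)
Your proof is correct and, for the second limit, essentially identical to the paper's: both split at a threshold of the form $(\text{const})\cdot cA^{-\delta}$, bound $|\psi|$ trivially on the short lower piece (whose contribution is $O(1)$ on the logarithmic scale), bound $|\psi|$ by an arbitrarily small quantity on the upper piece, and invoke the first limit there. The only cosmetic difference is that the paper writes the lower piece as the difference $\int_{cA^{-\delta}}^\infty-\int_{ncA^{-\delta}}^\infty$ and applies the first part to each term, whereas you bound it directly via $\omega(s)\leq\omega(cA^{-\delta})\leq\omega(0)$; both give $O(1)$.

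For the first limit the paper simply invokes l'H\^{o}pital's rule: with $f(A)=\int_{cA^{-\delta}}^\infty s^{-1}\omega(s)\,\dup s$ and $g(A)=\log A$, one has $f'(A)/g'(A)=\delta\,\omega(cA^{-\delta})\to\delta\,\omega(0+)$. Your monotonicity sandwich argument is a more elementary alternative that avoids differentiation altogether; it costs a few extra lines but needs no regularity of $\omega$ beyond monotonicity, so it is a perfectly good substitute. (Note that $\omega(0+)$ is in fact finite, since $\omega$ is defined at $0$ with $\omega(0)\in(0,\infty)$ and non-increasing, so your parenthetical ``$(0,\infty]$'' can be sharpened to ``$(0,\infty)$''.)
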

\begin{proof}
    The first claim follows easily from l'Hospital's rule. For $n\in\nat$, we can use the first part of the lemma and get
    \begin{align*}
        &\left|\frac{1}{\log A} \int_{cA^{-\delta}}^\infty s^{-1}\omega(s)\psi\!\left(c^{-1/\delta}As^{1/\delta}\right) \dup s\right|\\
        &\qquad\leq\frac{1}{\log A} \int_{cA^{-\delta}}^{\infty} s^{-1}\omega(s)\left|\psi\!\left(c^{-1/\delta}As^{1/\delta}\right)\right|\,\dup s\\
        &\qquad\leq\|\psi\|_\infty \frac{1}{\log A}\left(\int_{cA^{-\delta}}^\infty-\int_{ncA^{-\delta}}^\infty\right)s^{-1}\omega(s)\,\dup s\\
        &\qquad\qquad\mbox{}+\|\psi\I_{[n^{1/\delta},\infty)}\|_\infty\frac{1}{\log A}\int_{ncA^{-\delta}}^\infty s^{-1}\omega(s)\,\dup s\\
        &\qquad\xrightarrow[A\to\infty]{} \|\psi\|_\infty\big(\delta\omega(0+)-\delta\omega(0+)\big)
        + \|\psi\I_{[n^{1/\delta},\infty)}\|_\infty\delta\omega(0+)\\
        &\qquad=\|\psi\I_{[n^{1/\delta},\infty)}\|_\infty\delta\omega(0+)
        \xrightarrow[n\to\infty]{}0,
    \end{align*}
    and this completes the proof.
\end{proof}

The following asymptotic formula for integrals can be
proved by the Laplace method,
see e.g.\ de Bruijn~\cite[Section 4.2, pp.\ 63--65]{Bru58}
for $r_0=0$. 
\begin{lemma}\label{laplacemethod}
    Assume that $-\infty\leq v<w\leq\infty$, $h\in C^2(v,w)$, and $\int_v^w\eup^{-h(r)}\,\dup r<\infty$. Let $r_0\in(v,w)$. If $h(r_0)\geq0$, $h''(r_0)>0$,
    and $h$ is strictly decreasing on $(v,r_0]$ and strictly increasing on $[r_0,w)$, then
    \begin{gather*}
        \int_v^w\eup^{-Ch(r)}\,\dup r
        \,\sim\,
        \eup^{-Ch(r_0)} \sqrt{\frac{2\pi}{Ch''(r_0)}}
        \quad
        \text{as $C\to\infty$}.
    \end{gather*}
\end{lemma}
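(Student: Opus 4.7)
The plan is to execute Laplace's method, adapted so that the only global hypothesis used at the endpoints is the integrability $\int_v^w \eup^{-h(r)}\,\dup r<\infty$. First I would normalize: set $g(r):=h(r)-h(r_0)$, which is nonnegative, vanishes at $r_0$, inherits the monotonicity properties of $h$, and satisfies $g''(r_0)=h''(r_0)>0$. Then
\[
\int_v^w \eup^{-Ch(r)}\,\dup r = \eup^{-Ch(r_0)}\int_v^w \eup^{-Cg(r)}\,\dup r,
\]
so it suffices to prove $\int_v^w \eup^{-Cg(r)}\,\dup r\sim\sqrt{2\pi/(Ch''(r_0))}$ as $C\to\infty$. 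Note that $\int_v^w \eup^{-g(r)}\,\dup r = \eup^{h(r_0)}\int_v^w \eup^{-h(r)}\,\dup r<\infty$, a finiteness we will exploit on the tails.

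For a (small) $\delta>0$ to be chosen, split the integral at $|r-r_0|=\delta$. On the tail $\{r\in(v,w):|r-r_0|\geq\delta\}$ the strict monotonicity of $g$ on each side of $r_0$ yields $g(r)\geq\eta_\delta:=\min(g(r_0-\delta),g(r_0+\delta))>0$. Writing $\eup^{-Cg(r)}=\eup^{-(C-1)g(r)}\eup^{-g(r)}\leq \eup^{-(C-1)\eta_\delta}\eup^{-g(r)}$ for $C\geq 1$, the tail contribution is bounded by $\eup^{-(C-1)\eta_\delta}\int_v^w \eup^{-g(r)}\,\dup r=O(\eup^{-C\eta_\delta})$, which is exponentially smaller than the expected main term of order $C^{-1/2}$.

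For the local piece, Taylor's theorem around $r_0$ gives $g(r)=\tfrac12 h''(r_0)(r-r_0)^2(1+\varepsilon(r))$ with $\varepsilon(r)\to 0$ as $r\to r_0$. Given $\epsilon\in(0,1)$, shrink $\delta$ so that $|\varepsilon(r)|\leq\epsilon$ on $[r_0-\delta,r_0+\delta]$; then $g$ is sandwiched by $\tfrac12 h''(r_0)(1\mp\epsilon)(r-r_0)^2$. Substituting $u=\sqrt{Ch''(r_0)(1\mp\epsilon)}\,(r-r_0)$ in each bound produces Gaussian integrals of the form $(Ch''(r_0)(1\mp\epsilon))^{-1/2}\int_{-M_C}^{M_C}\eup^{-u^2/2}\,\dup u$ with $M_C\to\infty$, which converge to $\sqrt{2\pi/(Ch''(r_0)(1\mp\epsilon))}$. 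Letting $\epsilon\downarrow 0$ and adding the negligible tail gives the claim. The main obstacle is precisely the tail estimate: since no growth of $h$ at the endpoints of $(v,w)$ is assumed, one must borrow integrability from the hypothesis via the trick $\eup^{-Cg}=\eup^{-(C-1)g}\eup^{-g}$, dominating the first factor uniformly by $\eup^{-(C-1)\eta_\delta}$ on the tail; the local Gaussian sandwich is then standard.
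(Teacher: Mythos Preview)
Your proposal is correct and follows exactly the classical Laplace-method argument the paper references (de Bruijn); the paper's own (commented-out) detailed proof uses the same Taylor sandwich $h(r_0)+\tfrac{h''(r_0)\pm\epsilon}{2}(r-r_0)^2$ on $(r_0-\delta,r_0+\delta)$ and the same tail trick $\eup^{-Ch}\leq \eup^{-(C-1)[h(r_0-\delta)\wedge h(r_0+\delta)]}\,\eup^{-h}$ to exploit the integrability hypothesis. The only cosmetic difference is your preliminary normalization $g=h-h(r_0)$.
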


\begin{lemma}\label{sjg44k}
    Let $\phi:(0,\infty)\to(-1,\infty)$ be a continuous function such that $\phi(0+)=0$
    and $\limsup_{s\to\infty}s^{-\theta}(1+\phi(s))<\infty$ for some $\theta\in\real$. For all constants $\ca\in\real$ and $\cb,\cc,\cd>0$ the following asymptotics holds
    \begin{gather*}
        \int_0^\infty s^{\ca} \eup^{-Bs^{\cb}-\cc s^{-\cd}}\big(1+\phi(s)\big)\,\dup s
        \;\,\sim\;\,
        I(B)
        \quad\text{as $B\to\infty$}
    \end{gather*}
    where the value $I(B)$ is given by
    \begin{align*}
        I(B)
        :=\sqrt{\frac{2\pi}{\cb+\cd}}\,(\cb B)^{-\frac{2(\ca+1)+\cd}{2(\cb+\cd)}}(\cc\cd)^{\frac{2(\ca+1)-\cb}{2(\cb+\cd)}}
        \exp\left[-(\cb+\cd)\left(\cb^{-1}\cc\right)^{\frac{\cb}{\cb+\cd}}\left(\cd^{-1}B\right)^{\frac{\cd}{\cb+\cd}}\right].
    \end{align*}
\end{lemma}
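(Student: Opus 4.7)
The plan is to apply Laplace's method (Lemma~\ref{laplacemethod}) after a suitable rescaling that pins the critical point to $u=1$ and isolates the prefactor. First, set $h(s):=Bs^{\cb}+\cc s^{-\cd}$ and determine its unique minimum. Solving $h'(s)=\cb Bs^{\cb-1}-\cc\cd\, s^{-\cd-1}=0$ gives the critical point $s_0=(\cc\cd/(\cb B))^{1/(\cb+\cd)}$; a direct computation, using $\cb Bs_0^{\cb}=\cc\cd\, s_0^{-\cd}$, yields the minimum value $M:=h(s_0)=(\cb+\cd)(\cb^{-1}\cc)^{\cb/(\cb+\cd)}(\cd^{-1}B)^{\cd/(\cb+\cd)}$ and $h''(s_0)=(\cb+\cd)\cc\cd\, s_0^{-\cd-2}$. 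Note that $s_0\downarrow 0$ and $M\uparrow\infty$ as $B\to\infty$.

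Next, substitute $s=s_0 u$ to rewrite the integral in the normalized form
\begin{gather*}
    \int_0^\infty s^{\ca} \eup^{-h(s)}(1+\phi(s))\,\dup s
    =s_0^{\ca+1}\int_0^\infty u^{\ca}(1+\phi(s_0 u))\,\eup^{-Mg(u)}\,\dup u,
\end{gather*}
with the universal profile $g(u):=(\cd u^{\cb}+\cb u^{-\cd})/(\cb+\cd)$, which satisfies $g(1)=1$, $g'(1)=0$, $g''(1)=\cb\cd$, is strictly decreasing on $(0,1]$ and strictly increasing on $[1,\infty)$. The plan now reduces to proving
\begin{gather*}
    J(B):=\int_0^\infty u^{\ca}(1+\phi(s_0 u))\,\eup^{-Mg(u)}\,\dup u
    \,\sim\,\sqrt{\frac{2\pi}{M\cb\cd}}\,\eup^{-M}\qquad(B\to\infty),
\end{gather*}
since substituting back and simplifying via $s_0^{\cb+\cd}=\cc\cd/(\cb B)$ reconstructs exactly the formula $I(B)$.

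To establish the asymptotic for $J(B)$, I would split the integral as $\int_0^{1-\epsilon}+\int_{1-\epsilon}^{1+\epsilon}+\int_{1+\epsilon}^\infty$ for a small fixed $\epsilon>0$. On the middle interval, $s_0 u\to 0$ uniformly in $u$, so $\phi(s_0 u)\to 0$ uniformly (continuity of $\phi$ with $\phi(0+)=0$), and $u^{\ca}\to 1$ as $\epsilon\to 0$; hence $u^{\ca}(1+\phi(s_0 u))=1+o(1)$ uniformly, and the middle piece is $(1+o(1))\int_{1-\epsilon}^{1+\epsilon}\eup^{-Mg(u)}\dup u$, to which Lemma~\ref{laplacemethod} (applied with $h\leftrightarrow g$, $r_0=1$, $C\leftrightarrow M$) gives $\sim\sqrt{2\pi/(M\cb\cd)}\,\eup^{-M}$.

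The main technical point is the tail estimate. From continuity of $\phi$ on $(0,\infty)$ with $\phi(0+)=0$ and the $\limsup$ hypothesis, one obtains a global bound $1+\phi(s)\leq C(1+s^{\theta_+})$ with $\theta_+=\max(\theta,0)$. On $[1+\epsilon,\infty)$, write $\eup^{-Mg(u)}=\eup^{-M}\eup^{-M(g(u)-1)}$; since $g(u)-1\geq\eta:=g(1+\epsilon)-1>0$ on this range and $g(u)\gtrsim u^{\cb}$ at infinity, the integral $\int_{1+\epsilon}^\infty u^{\ca}(1+(s_0u)^{\theta_+})\eup^{-M(g(u)-1)}\dup u$ is bounded by $O(\eup^{-M\eta/2})$ (the polynomial factors $u^{\ca+\theta_+}$ are absorbed into the super-exponential decay from $g(u)\sim \cd u^{\cb}/(\cb+\cd)$), giving a right tail $o(\eup^{-M}/\sqrt{M})$. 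On $(0,1-\epsilon]$, $\phi(s_0u)$ is bounded uniformly in $B$ (since $s_0u\leq 1$ for $B$ large), and $g(u)\geq g(1-\epsilon)>1$ with $g(u)\to\infty$ like $u^{-\cd}$ as $u\downarrow 0$, which dominates any singularity of $u^{\ca}$ at zero; the left tail is likewise $o(\eup^{-M}/\sqrt{M})$. Combining the three pieces yields the claim. The hardest step will be keeping the tail book-keeping clean around the polynomial-growth bound on $\phi$, but it never interferes with the exponential decay supplied by $g$.
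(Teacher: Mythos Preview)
Your argument is correct and complete in outline; it is a genuine alternative to the paper's own proof.

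The paper proceeds differently in two respects. First, for the $\phi\equiv 0$ case it does \emph{not} rescale by the critical point $s_0$; instead it substitutes $r=(\cc^{-1}B)^{(\ca+1)/(\cb+\cd)}s^{\ca+1}$ (and $r=\log s+\text{const}$ when $\ca=-1$), which absorbs the amplitude $s^{\ca}$ into the Jacobian and reduces immediately to the bare Laplace integral $\int\eup^{-Ch(r)}\dup r$ of Lemma~\ref{laplacemethod}. The price is a case distinction on $\ca$; the benefit is that no amplitude has to be carried through the Laplace step. Your substitution $s=s_0u$ avoids that case split and yields the fixed profile $g(u)=(\cd u^{\cb}+\cb u^{-\cd})/(\cb+\cd)$, at the cost of handling the factor $u^{\ca}$ as part of the amplitude (which you then squeeze between $(1\pm\epsilon)^{\ca}$ and let $\epsilon\downarrow 0$).

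Second, the paper treats the perturbation $\phi$ by a \emph{fixed} split at $s=1/n$: the piece on $(0,1/n)$ is dominated by $\|\phi\I_{(0,1/n)}\|_\infty$ times the main integral, and the piece on $[1/n,\infty)$ is killed by dominated convergence using the explicit exponent in $I(B)$. You instead fold $\phi$ into the Laplace analysis after rescaling, exploiting that $s_0u\to 0$ uniformly near $u=1$ so $\phi(s_0u)\to 0$ there, and handling the tails via the global bound $1+\phi(s)\le C(1+s^{\theta_+})$. Both routes work; yours is more self-contained around the critical point, while the paper's separates the two issues (Laplace asymptotics vs.\ perturbation) cleanly. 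One small point worth making precise in your write-up: your double limit ($B\to\infty$ for fixed $\epsilon$, then $\epsilon\to 0$) should be stated as $\liminf/\limsup$ bounds in the usual way; you clearly have this in mind, but the phrase ``$u^{\ca}(1+\phi(s_0u))=1+o(1)$ uniformly'' conflates the two limits.
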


\begin{proof}
    First, we prove that
    \begin{gather}\label{jds4j}
        \int_0^\infty s^{\ca}\eup^{-Bs^{\cb}-\cc s^{-\cd}}\,\dup s
        \;\sim\;
        I(B)
        \quad\text{as $B\to\infty$}.
    \end{gather}
    If $\ca\neq-1$, changing variables according to
    $r=\left(\cc^{-1}B\right)^{(\ca+1)/(\cb+\cd)}s^{\ca+1}$
    gives
    \begin{align*}
        \int_0^\infty s^{\ca}\eup^{-Bs^{\cb}-\cc s^{-\cd}}\,\dup s
        =\frac{1}{|\ca+1|}\left(\cc B^{-1}\right)^{\frac{\ca+1}{\cb+\cd}}
        \int_0^\infty\exp\left[-\cc^{\frac{\cb}{\cb+\cd}}B^{\frac{\cd}{\cb+\cd}}\left(r^{\frac{\cb}{\ca+1}}+r^{-\frac{\cd}{\ca+1}}\right)\right] \dup r.
    \end{align*}
    Lemma~\ref{laplacemethod} with $v=0$, $w=\infty$, $h(r)=r^{\cb/(\ca+1)}+r^{-\cd/(\ca+1)}$, $r_0=(\cb^{-1}\cd)^{(\ca+1)/(\cb+\cd)}$ and $C=\cc^{\cb/(\cb+\cd)}B^{\cd/(\cb+\cd)}$ yields \eqref{jds4j}.

    If $\ca=-1$, we change variables according to $r=\log s+(\cb+\cd)^{-1}\log\left(\cc^{-1}B\right)$, and use Lemma~\ref{laplacemethod} with $v=-\infty$, $w=\infty$, $h(r)=\eup^{\cb r}+\eup^{-\cd r}$, $r_0=(\cb+\cd)^{-1}\log(\cb^{-1}\cd)$ and $C=\cc^{\cb/(\cb+\cd)}B^{\cd/(\cb+\cd)}$ to obtain \eqref{jds4j}.

    We still have to check that
    \begin{gather*}
        \lim_{B\to\infty} B^{\frac{2(\ca+1)+\cd}{2(\cb+\cd)}}
        \exp\left[(\cb+\cd)\left(\cb^{-1}\cc\right)^{\frac{\cb}{\cb+\cd}} \left(\cd^{-1} B\right)^{\frac{\cd}{\cb+\cd}}\right]
        \int_0^\infty s^{\ca}\eup^{-Bs^{\cb}-\cc s^{-\cd}}\phi(s)\,\dup s
        =0.
    \end{gather*}
    To this end, we fix $n\in\nat$ and observe that
    \begin{align*}
        I_1(n,B) := \int_0^{1/n} s^{\ca} \eup^{-Bs^{\cb}-\cc s^{-\cd}} |\phi(s)|\,\dup s
        &\leq\left\|\phi\I_{(0,1/n)}\right\|_\infty \int_0^{1/n} s^{\ca} \eup^{-Bs^{\cb}-\cc s^{-\cd}}\,\dup s\\
        &\leq\left\|\phi\I_{(0,1/n)}\right\|_\infty \int_0^\infty s^{\ca} \eup^{-Bs^{\cb}-\cc s^{-\cd}}\,\dup s.
    \end{align*}
    Moreover, set
    \begin{gather*}
        I_2(n,B)
        := \int_{1/n}^\infty s^{\ca}\eup^{-Bs^{\cb}-\cc s^{-\cd}}|\phi(s)|\,\dup s.
    \end{gather*}
    By our assumption, there exists a constant $C(n)>0$ depending on $n$ such that $1+\phi(s)\leq C(n)s^\theta$ for all $s\geq1/n$. Thus,
    \begin{gather*}
        |\phi(s)|
        \leq 1+(1+\phi(s))
        \leq (ns)^{\theta\vee 0} + C(n)s^\theta
        \leq C(n,\theta) s^{\theta\vee 0},\quad s\geq 1/n,
    \end{gather*}
    where $C(n,\theta):=n^{\theta\vee0}
    +C(n)n^{(-\theta)\vee0}$. Using the
    dominated convergence theorem we deduce
    \begin{align*}
        &B^{\frac{2(\ca+1)+\cd}{2(\cb+\cd)}}
        \exp\left[(\cb+\cd)\left(\cb^{-1}\cc\right)^{\frac{\cb}{\cb+\cd}}\left(\cd^{-1} B\right)^{\frac{\cd}{\cb+\cd}}\right]I_2(n,B)\\
        &\leq C(n,\theta) \int_{1/n}^\infty B^{\frac{2(\ca+1)+\cd}{2(\cb+\cd)}} s^{\ca+(\theta\vee0)}
        \exp\left[(\cb+\cd)\left(\cb^{-1}\cc\right)^{\frac{\cb}{\cb+\cd}} \left(\cd^{-1}B\right)^{\frac{\cd}{\cb+\cd}}-Bs^{\cb}-\cc s^{-\cd}\right]
        \dup s\\
        &\xrightarrow[B\to\infty]{}C(n,\theta)\cdot 0
        = 0.
    \end{align*}
    Combining these calculations gives
    \begin{align*}
        &B^{\frac{2(\ca+1)+\cd}{2(\cb+\cd)}}
        \exp\left[(\cb+\cd)\left(\cb^{-1}\cc\right)^{\frac{\cb}{\cb+\cd}} \left(\cd^{-1}B\right)^{\frac{\cd}{\cb+\cd}}\right]
        \cdot\left| \int_0^\infty s^{\ca} \eup^{-Bs^{\cb}-\cc s^{-\cd}} \phi(s)\,\dup s\right|\\
        &\qquad\leq B^{\frac{2(\ca+1)+\cd}{2(\cb+\cd)}}
        \exp\left[(\cb+\cd)\left(\cb^{-1}\cc\right)^{\frac{\cb}{\cb+\cd}} \left(\cd^{-1}B\right)^{\frac{\cd}{\cb+\cd}}\right]
        \cdot\big(I_1(n,B)+I_2(n,B)\big)\\
        &\qquad\leq B^{\frac{2(\ca+1)+\cd}{2(\cb+\cd)}}
        \exp\left[(\cb+\cd)\left(\cb^{-1}\cc\right)^{\frac{\cb}{\cb+\cd}} \left(\cd^{-1}B\right)^{\frac{\cd}{\cb+\cd}}\right]\\
        &\qquad\qquad\mbox{}\times\left(\left\|\phi\I_{(0,1/n)}\right\|_\infty\int_0^\infty s^{\ca}\eup^{-Bs^{\cb}-\cc s^{-\cd}}\,\dup s+I_2(n,B)\right)\\
        &\qquad\xrightarrow[B\to\infty]{}
        \left\|\phi\I_{(0,1/n)}\right\|_\infty \sqrt{\frac{2\pi}{\cb+\cd}} \,\cb^{-\frac{2(\ca+1)+\cd}{2(\cb+\cd)n}}(\cc\cd)^{\frac{2(\ca+1)-\cb}{2(\cb+\cd)}}+0
        \xrightarrow[n\to\infty]{}0+0
        = 0.
    \end{align*}
    This completes the proof.
\end{proof}

\end{document}